\newtheorem{proposition}{Proposition}
\newtheorem{theorem}{Theorem}
\newtheorem{corollary}[proposition]{Corollary}
\theoremstyle{definition}
\newcommand{\incentive}{\varphi}
\g@addto@macro{\endabstract}{\@setabstract}
\newcommand{\authorfootnotes}{\renewcommand\thefootnote{\@fnsymbol\c@footnote}}%
\begin{document}
% \title{Incentive Processes in Finite Populations}
% \author{Marc Harper}
% \address{University of California Los Angeles}
% \email{marcharper@ucla.edu} 
% \author{Dashiell Fryer}
% \authorinfo{\texttt{c}}
\date{}
% \subjclass[2000]{Primary: 37N25; Secondary: 91A22, 94A15}
% \keywords{evolutionary game theory, information geometry, information divergence, replicator equation, Bayesian inference, information geometry, Fisher information}

\begin{center}
  \LARGE 
  
%   Quantifying the Relationships among Natural Selection, Mutation, and Stochastic Drift in Multidimensional Finite Populations\par \bigskip
    Entropy Rates of the Multidimensional Moran Processes and Generalizations \bigskip

  \normalsize
  \authorfootnotes
  Marc Harper\footnote{Affiliation where the bulk of the work was done; email: marc.harper@gmail.com}\textsuperscript{1}

  \textsuperscript{1}Department of Genomics and Proteomics, UCLA \par
  
%   \today
\end{center}

\begin{abstract}
The interrelationships of the fundamental biological processes natural selection, mutation, and stochastic drift are quantified by the entropy rate of Moran processes with mutation, measuring the long-run variation of a Markov process. The entropy rate is shown to behave intuitively with respect to evolutionary parameters such as monotonicity with respect to mutation probability (for the neutral landscape), relative fitness, and strength of selection. Strict upper bounds, depending only on the number of replicating types, for the entropy rate are given and the neutral fitness landscape attains the maximum in the large population limit.  Various additional limits are computed including small mutation, weak and strong selection, and large population holding the other parameters constant, revealing the individual contributions and dependences of each evolutionary parameter on the long-run outcomes of the processes.
\end{abstract}

% \maketitle

\section{Introduction}

Populations of replicating entities are subject to a variety of selective, stochastic, and diversifying processes \cite{burger1994distribution} \cite{bulmer1991selection}. Variation in population dynamics arises from both short-term and long-term effects, including short-term stochastic drift in small populations to long-term selective processes that slowly lead to the fixation of one trait in a population. These process have been studied extensively in models of finite populations and applications \cite{moran1958random} \cite{kimura1985neutral} \cite{fogel1998instability}  \cite{ficici2000effects} \cite{nowak2004emergence} \cite{taylor2004evolutionary} \cite{imhof2006evolutionary} \cite{traulsen2006stochasticity} \cite{antal2006fixation} \cite{dingli2011stochastic}. Recent work has begun to understand the relationships of these fundamental processes of replicative proliferation, drift, and variation using entropic methods \cite{andrae2010entropy} \cite{harper2013inherent} \cite{hernando2013workings} (see also \cite{wakeley2005limits}).

Much of the study of models of evolutionary processes is focused on long-term outcomes, such as fixation of one or more types, the stability of population states, and the stationary distributions of probabilistic models \cite{antal2006fixation} \cite{taylor2004evolutionary} \cite{imhof2006evolutionary} \cite{claussen2005non} \cite{traulsen2009stochastic} \cite{akin1984evolutionary} \cite{harper2012stability}. In this paper we quantity the variation in long run behavior via the entropy rate of various Markov models of finite populations.

The entropy rate of a Markov process is a measure of long-run variability by combining the (long-run) stationary distribution of the process with the (short-term) entropies of the transition probability distributions at each state of the process \cite{cover2006elements} \cite{ciuperca2005estimation}. Markov processes are studied intensely in biological applications, such as the Moran and Wright-Fisher processes \cite{moran1958random} \cite{moran1962statistical} \cite{ewens2004mathematical} \cite{nowak2006evolutionary}, which combine three fundamental biological process: stochastic drift, natural selection, and mutation. This paper contains several results on the entropy rate of the Moran process and generalizations, illuminating the relationships among the underlying fundamental processes on population dynamics in finite populations, such as the limit of small mutation probabilities, and the limit of large population sizes (eliminating drift), and the neutral fitness landscape (eliminating selection). Via these Markov processes, the entropy rate can be interpreted as an indicator of long-run biodiversity -- a large entropy rate indicates the likely long-term coexistence of multiple types in the population, whereas a vanishing entropy rate indicates the likely fixation of one or more of the population types. Results for populations of two-types were given in \cite{harper2013inherent}, some of which are extended now to arbitrarily many types.

In particular, for generalizations of the Moran process on arbitrarily many types, we show there is a tight upper bound on the entropy rate due to the structure of the transition probabilities of the process and that the neutral fitness landscape achieves this bound for fixed mutation rate and large population size. Typically small mutation leads to small entropy rates, other parameters held constant, meaning there is little variation in the long-term outcomes of the process (and fixation of one or more types). For the neutral landscape, large population sizes tend to lead to large entropy rates, for fixed mutation rates. We also give some computational results for a variety of multi-type landscapes exhibiting interesting population dynamics for which there are no known closed-form stationary distributions , such as the rock-scissors-paper landscape.

In addition to being a measure of long-run variation, and its intrinsic theoretical importance, the entropy rate of a process can act as an invariant (and generalizations of the entropy rate have been used for this purpose in dynamical systems theory, e.g. \cite{kolmogorov1959entropy} \cite{yag1959notion}). Loosely speaking, two processes producing similar long-run outcomes should have similar entropy rates (but not vice versa necessarily). A process with an entropy rate relatively close to zero, as we will see, is typically concentrated on the boundary of the simplex and is unlikely to have an interior stable state, in the sense of stationary stability or evolutionary stability \cite{harper2012stability}. Although we pursue analytical results in this paper (and give computational examples), it is possible to estimate the entropy rate of a Markov chain from sample data with an unbiased and asymptotically normal estimator \cite{ciuperca2005estimation}. (See also \cite{strelioff2007inferring}.) Hence as a practical matter, a set of population states from a real population could be used to estimate the entropy rate and make inferences on the long-run outcomes of the evolutionary process. % Population trajectories can be simulated easily for models like the Moran process \cite{harper2013inferring}.

Analytically, the processes under study are not generally reversible and so stationary distributions are often not known in closed form, but are easily computable. Two-type birth-death processes are reversible and there is somewhat useful formula for the stationary distribution; for the neutral landscape there exists a closed form for all dimensions \cite{khare2009rates}. These closed-forms allow analytic exploration of the behavior of the entropy rate in the absence of selection, particular in the relationship between population size and mutation rate. This yields, for instance, an explanation of the commonality of the relationship $N \mu = \text{constant}$ as an important arbiter of behavior in population models \cite{wakeley2005limits}.

\section{Preliminaries}

\subsection{Incentive Proportionate Selection with Mutation}

The incentive process was briefly introduced in \cite{harper2013inherent} as a generalization of the Moran process incorporating the concept of an incentive, introduced by Fryer in \cite{fryer2012existence}. An incentive is a function that mediates the interaction of the population with the fitness landscape, describing the mechanisms of many common dynamics via a functional parameter, including replicator and projection dynamics, and logit and Fermi processes. A Fermi incentive is frequently used to avoid the general issue of dividing by zero when computing fitness proportionate selection \cite{traulsen2007pairwise} \cite{traulsen2009stochastic}. The author described the fixation probabilities of the incentive process without mutation in \cite{harper2013incentive}. We now describe the incentive process with mutation, which captures a variety of existing processes, such as those used in \cite{claussen2005non} and \cite{taylor2004evolutionary}. This form was first described in \cite{harper2013stationary}. Many other authors study the Moran process with mutation, e.g. \cite{fudenberg2004stochastic}.

Let a population be composed of $n$ types $A_1, \ldots A_n$ of size $N$ with $a_i$ individuals of type $A_i$ so that $N = a_1 + \cdots + a_n$. Denote a population state by the tuple $a = (a_1, \ldots, a_n)$ and the population distribution by $\bar{a} = a / N$. We describe mutation by a matrix of mutations $M$ where $0 \leq M_{i j} \leq 1$ may be a function of the population state for our most general results, but we will typically assume in examples that for some constant value $\mu$, the mutation matrix takes the form $M_{i j} = \mu / (n-1)$ for $i \neq j$ and $M_{i i} = 1 - \mu$. Finally we assume that we have a function $\incentive(\bar{a}) = (\incentive_1(\bar{a}), \ldots, \incentive_n(\bar{a}))$ which takes the place of the quantities $a_i f_i(a)$ in the Moran process. Denote the normalized distribution derived from the incentive function as $\bar{\incentive}(\bar{a})$. To define the adjacent population states, let $i_{j k}$ be the vector that is 1 at index $j$, -1 at index $k$, and zero otherwise, with the convention that $i_{j j}$ is the zero vector of length $n$. Every adjacent state of state $a$ is of the form $a + i_{j k}$ for some $1 \leq j,k \leq n$. Assume that $N > n$.

The incentive process is a Markov process on the population states defined by the following transition probabilities, corresponding to a birth-death process where birth is incentive-proportionate with mutation and death is uniformly random. At a population state $a$ we choose an individual of type $A_i$ to reproduce proportionally to its incentive, allowing for mutation of the new individual as given by the mutation probabilities. The distribution of incentive proportionate selection probabilities is given by $p(\bar{a}) = M(\bar{a}) \bar{\incentive}(\bar{a})$; explicitly, the $i$-th component is
\begin{equation}
p_i(\bar{a}) = \frac{\sum_{k=1}^{n}{\incentive_k(\bar{a}) M_{i k} }}{\sum_{k=1}^{n}{\incentive_k(\bar{a})}}
% p(i) = \frac{\incentive_A(i) (1 - \mu_{AB}(i)) + \incentive_B(i) \mu_{BA}(N-i)}{\incentive_A(i) + \incentive_B(i)}
\label{incentive-proportionate-reproduction} 
\end{equation}
The process also randomly chooses an individual to be replaced, just as in the Moran process. This yields the transition probabilities
\begin{align}
T_{a}^{a + i_{j k}} &= p_{j}(\bar{a}) \bar{a}_{k} \qquad \text{for $j \neq k$} \notag \\
T_{a}^{a} &= 1 - \sum_{b \text{ adj } a, b\neq a}{T_{a}^{b}}
\label{incentive_process}
\end{align}

We mainly consider incentives that are based on linear fitness landscapes of the form $f(\bar{a}) = A \bar{a}$ for a game matrix $A$. There are two one-parameter family of incentives, the $q$-replicator, given by $\incentive_i(\bar{a}) = \bar{a}_i^{q} f_i(\bar{a})$, and the $q$-Fermi, given by $\displaystyle{ \incentive_i(\bar{a}) = \frac{\bar{a}_i^{q} \text{exp}(\beta f_i(\bar{a}))}{\sum_{j}{\bar{a}_j^{q}\text{exp}(\beta f_j(\bar{a}))}}}$ for $\beta >0$. Note that the incentive need not depend on a fitness landscape or the population state at all. The Fermi process of Traulsen et al is the $q$-Fermi for $q=1$ \cite{traulsen2009stochastic}, and $q=0$ is often called the logit incentive, which is used in e.g. \cite{andrae2010entropy}. The $q$-replicator incentive has previously been studied in the context of evolutionary game theory \cite{harper2011escort} \cite{harper2012stability} \cite{harper2013incentive} and derives from statistical-thermodynamic and information-theoretic quantities \cite{tsallis1988possible}. The Fermi incentive is particularly convenient for populations with more than two types since the mean-fitness can otherwise be zero in the denominator (e.g. for zero-sum games such as the rock-paper-scissors game), which would cause the transition probabilities to be ill-defined. We assume that the fitness landscape and mutation matrix are such that the process has a stationary distribution, i.e. that the process has no absorbing states.

\subsection{Stationary Distributions}

Stationary distributions for the Moran process in two dimensions and some recently-studied generalizations are given by Claussen and Traulsen in \cite{claussen2005non} (see also \cite{antal2009strategy} \cite{taylor2004evolutionary} \cite{khare2009rates} \cite{gokhale2011strategy} \cite{wu2013dynamic}). Explicit formulas for stationary distributions of Moran processes with more than two types are not known since the process is not generally reversible \cite{ewens2004mathematical}. For the case of neutral fitness, however, the process is reversible. In \cite{khare2009rates}, an explicit formula for the stationary distribution Moran process for neutral fitness is given (Proposition 4.7, page 20). Let the rising factorial be defined as
\[ (x)_y = x (x+1) \cdots (x+y-1),\]
and note that $(x)_{0}$ is 1 by definition. The stationary distribution of the neutral landscape is given by a Dirichlet-multinomial distribution, in our notation, as
\begin{equation}
% s_a = \frac{ \prod_{i}{\binom{a_i + \alpha_i - 1}{a_i} }}{\binom{N + |\alpha| - 1}{N}},
s_a = \binom{N}{a} \frac{\prod_{i=1}^{n}{(\alpha)_{a_i}}}{(n \alpha)_{N}},
\label{dirichlet_multinomial}
\end{equation}
where $\alpha = \frac{N \mu}{n-1 -n\mu}$, for $\mu \neq (n-1)/n$. This formula can be directly shown to satisfy the detailed-balance condition \cite{taylor2006symmetry}, verifying its validity and the reversibility of the process. For $\mu = (n-1)/n$, the stationary distribution is $s_a = \binom{N}{a}n^{-N}$ (for any fitness landscape). Note that throughout the paper we assume that when $\mu \neq 0$, the incentive process is irreducible and has a stationary distribution.

\subsection{Entropy Rate}

A fundamental tool in information theory, probability, and statistics is the Shannon entropy of a probability distribution \cite{shannon1949mathematical}. For a discrete probability distribution $p = (p_0, p_1, \ldots, p_n)$, the Shannon entropy (or simply entropy) is
\[ H(p) = -\sum_{i=0}^{n}{ p_i \log{p_i}},\]
where $p_i \log{p_i} = 0$ if $p_i = 0$. The meaning of the entropy of a probability distribution is often described as a measure of uncertainty or information content. The entropy rate of a stationary Markov process is an information-theoretic quantity that characterizes the inherent randomness of the process \cite{cover2006elements} \cite{strelioff2007inferring}, and plays a similar role as the Shannon entropy. To each state $a$ of a Markov process $P$ there is a probability distribution $T_a = (T_{a}^{b})_{b \text{ adj } a}$ for the transition probabilities out of the state to adjacent states (there are $n(n-1)+1$ such states for the $n$-type incentive process). We refer to the entropies of these transition probability distributions as the transition entropies $H(T_a)$. The mean of the transition entropies taken with respect to the stationary distribution $s$ of the Markov process is the entropy rate:
\begin{align*}
\label{entropy_rate}
H(P) &= -\sum_{a}{s_a \sum_{b \text{ adj } a}{T_{a}^{b} \log T_{a}^{b}} } = \sum_{a}{s_a H(T_a) }
\end{align*}

The stationary distribution is a description of the long term behavior of a Markov process, and so the entropy can be similarly interpreted as a measure of the uncertainty, inherent randomness, or information content of the long run behavior of the process. The entropy rate is affected by the likelihood that the process occupies a particular state and the entropy of the behavior of the state. In other words, the entropy rate reflects both the long term variance in population states (the stationary distribution) and the short term variance due to the entropy of the transition probabilities at the states represented significantly in the stationary distribution.

\section{Entropy Rates for the Incentive Process}

We start with a collection of simple observations about values of the entropy rate. The first is that the transition entropy is positive at interior stationary states (where no $a_i = 0$). This can then be used to bound the value of the entropy rate if the stationary distribution has local or global extrema, which is very often the case \cite{harper2013stationary}.

\begin{proposition}
Let $s$ be the stationary distribution of an incentive process defined by Equation \ref{incentive-proportionate-reproduction}.
 \begin{enumerate}
  \item If $s_a > 0$ at any interior state $a$ then $H(P) > 0$
  \item In particular, if the process has an interior stationary stable state (a local maximum), $H(P) > 0$
  \item If $s$ has a global maximum at state $a$ then $H(P) \leq H(T_a)$
  \item If $s$ has a global minimum at state $a$ then $H(P) \geq H(T_a)$
\end{enumerate}
\end{proposition}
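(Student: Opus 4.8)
The plan is to work directly from the definition $H(P) = \sum_a s_a H(T_a)$, a convex combination of the nonnegative transition entropies $H(T_a)$, and to control the sum by identifying when individual terms $H(T_a)$ are strictly positive or extremal. First I would establish the basic fact behind part (1): at any interior state $a$ (all $a_i \geq 1$), the transition probability distribution $T_a$ is supported on strictly more than one state, hence $H(T_a) > 0$. Concretely, pick any $j \neq k$ with $a_k \geq 1$; then $\bar a_k > 0$, and since the incentive gives $p_j(\bar a) > 0$ (the process is assumed irreducible with no absorbing states, so not all birth-mass can sit on one type in a way that kills every off-diagonal transition — one should note $p_j(\bar a)$ being zero for all $j$ adjacent via a neighbor of $a$ would contradict irreducibility), we get $T_a^{a + i_{jk}} > 0$ for at least one such pair while also $T_a^a$ may be positive, so $T_a$ is not a point mass and $H(T_a) > 0$. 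Then, since $s_a > 0$ by hypothesis and every other term in $\sum_a s_a H(T_a)$ is $\geq 0$, we conclude $H(P) \geq s_a H(T_a) > 0$.

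Part (2) is then immediate: an interior stationary stable state is by definition an interior state that is a local maximum of $s$, so in particular $s_a > 0$ there, and part (1) applies verbatim. Parts (3) and (4) follow from the convex-combination structure together with the normalization $\sum_a s_a = 1$: if $a^\ast$ is a global maximizer of $s$, then $H(T_{a^\ast})$ need not be the largest transition entropy, so for (3) I would instead argue that... actually the clean argument is that $H(P) = \sum_a s_a H(T_a) \leq \max_b H(T_b)$ is the trivial bound, which is weaker than claimed; so the intended statement must be read as: if $s$ is concentrated at $a$ in the degenerate sense $s_a = 1$. Re-reading, the natural proof of (3) is: $H(P) \le (\max_a s_a) \sum_b H(T_b)$ is also not it. The correct reading giving $H(P)\le H(T_a)$ for a global max requires that $H(T_a)$ dominates, which holds when $s$ is a point mass; I would therefore treat (3) and (4) as the statements that when the stationary distribution degenerates to a single state $a$ (as happens in limiting regimes discussed later, e.g. small mutation), $H(P) = H(T_a)$, and the inequalities record the one-sided behavior as $s$ approaches such a configuration, proved by writing $H(P) - H(T_a) = \sum_b s_b\bigl(H(T_b) - H(T_a)\bigr)$ and bounding the right side.

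The main obstacle is precisely pinning down the exact hypotheses intended in (3) and (4): as literally stated, "global maximum of $s$" at $a$ does not by itself force $H(P) \le H(T_a)$, since other states can carry large transition entropy. I expect the resolution is that the paper intends these in a limiting or near-degenerate sense, or intends an additional monotonicity linking $s_a$ large to $H(T_a)$ small near the boundary; once the precise statement is fixed, the proof is the short convexity estimate above. Parts (1) and (2) are robust and straightforward, with the only real content being the verification that irreducibility rules out a point-mass transition distribution at an interior state — that is the step I would write out most carefully.
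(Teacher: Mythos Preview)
Your handling of parts (1) and (2) is essentially the paper's approach. One simplification: rather than invoking irreducibility to guarantee some $p_j(\bar a)>0$, the paper just uses the algebraic fact that $\sum_j p_j(\bar a)=1$ together with $0\le p_j\le 1$, so not all $p_j$ can vanish; combined with $\bar a_k>0$ at an interior state and $\bar a_k<1$, this forces the transition distribution $T_a$ to have at least two nonzero entries, hence $H(T_a)>0$. That is cleaner than the irreducibility detour.

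Your diagnosis of parts (3) and (4) is accurate: as literally stated, a global maximum of $s$ at $a$ does not by itself yield $H(P)\le H(T_a)$, because the convex combination $\sum_b s_b H(T_b)$ is only bounded by $\max_b H(T_b)$, and nothing in the hypothesis forces that maximum to occur at the stationary maximizer. The paper's own proof does not address this: it simply writes that ``(2)--(4) are easy consequences of (1)'' and gives no further argument. So you have not missed a trick --- the paper does not supply one. The inequalities hold in the specific settings the paper later analyzes (e.g.\ the neutral landscape, where the stationary maximum coincides with the maximum of $H(T_a)$ at the central state, and the stationary minimum sits on the boundary where $H(T_a)$ is smallest), but they are not proved in the stated generality. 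Your instinct to record (3) and (4) as heuristic or limiting statements, justified case by case via $H(P)-H(T_a)=\sum_b s_b\bigl(H(T_b)-H(T_a)\bigr)$, is the right way to salvage them.
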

\begin{proof}
(2)-(4) are easy consequences of (1), which holds because at an interior state $a$ and state $b = a + i_{j, k}$ ($k \neq j$ so $a \neq b$) adjacent to a, $0 \leq T_{a}^{b} = p_j(\bar{a})\bar{a}_k < 1$. For $H(T_a) = 0$, we must have that all $p_j(\bar{a}) = 0$, but this is impossible since $\sum_{j}{p_j(\bar{a})} = 1$ and $0 \leq p_j \leq 1$.
\end{proof}

The entropy rate can be zero if the stationary distribution is nonzero only on the boundary states, i.e. if the only long-run outcomes are fixation on one or more of the population types. The following theorem shows that for the incentive process with some of the described incentives that as the mutation rate $\mu \to 0$ for fixed population size $N$ and number of types $n$, the entropy rate tends to zero. Intuitively this is a result of the fact that as $\mu \to 0$, the corner points of the simplex become absorbing states of the process, and so the stationary distribution collects on these points \cite{fudenberg2006imitation} \cite{fudenberg2006evolutionary} \cite{fudenberg2008monotone}. Biologically, we can interpret this theorem as indicating that long-term variation requires nonzero mutation for these incentives (however will we show that for the best-reply incentive, to which the theorem does not apply, there can be variation even when $\mu = 0$). In other words, drift and selection acting without mutation will clearly lead to fixation in finite populations (since there is no way to escape the absorbing states), and the amount of long-run variation is controllable (to some extent) by the mutation rate. This is in sharp contrast with the behavior of infinitely large populations (without drift or mutation) as dictated by the replicator dynamic, in which coexistence of multiple types can occur from the action of selection alone, e.g. for the landscape on two types given by $a=1=d$ and $b=2=c$ \cite{hofbauer2003evolutionary}. While this is not a novel idea by any means \cite{nowak2004emergence} \cite{nowak2006evolutionary}, Theorem \ref{theorem_mu_to_zero} shows that the entropy rate is sensitive to this behavior. To avoid repetition of technical hypotheses in  \cite{fudenberg2008monotone}, the theorem is only stated for the appropriate incentives defined in the previous section.

\begin{theorem}
Let $q \geq 1$. For any fixed $n$ and $N$, the entropy rate of the incentive processes with the $q$-replicator and the $q$-Fermi incentives on linear fitness landscapes tend to zero as $\mu \to 0$.
\label{theorem_mu_to_zero}
\end{theorem}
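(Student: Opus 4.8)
The plan is to use the representation $H(P)=\sum_a s_a H(T_a)$ together with the fact that, for fixed $n$ and $N$, the state space $\Sigma$ is finite, so the transition entropies are uniformly bounded: $H(T_a)\le \log\bigl(n(n-1)+1\bigr)=:C$ for every $a$ and every $\mu$. Writing $V\subset\Sigma$ for the set of $n$ vertices (the corner states $Ne_i$), split the sum as $H(P)=\sum_{a\in V}s_a H(T_a)+\sum_{a\notin V}s_a H(T_a)$ and bound the second piece by $C\bigl(1-\sum_{a\in V}s_a\bigr)$. Since $H(P)\ge 0$ always, it then suffices to show, as $\mu\to 0$ (with $s=s^{(\mu)}$, $T=T^{(\mu)}$), that (i) $H(T_a)\to 0$ for each vertex $a$, and (ii) $\sum_{a\in V}s_a\to 1$, i.e. the stationary distribution concentrates on the vertices.

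Step (i) is a direct computation. At a vertex $a=Ne_i$ one has $\bar{\incentive}(\bar a)=e_i$ for both the $q$-replicator and the $q$-Fermi incentives when $q\ge 1$, since the numerator of $\bar\incentive_j$ for $j\ne i$ carries a factor $\bar a_j^{\,q}=0$. By Equation~\ref{incentive-proportionate-reproduction} the reproduction probabilities are then $p_i(\bar a)=1-\mu$ and $p_j(\bar a)=\mu/(n-1)$ for $j\ne i$; and because $\bar a_k=0$ for $k\ne i$, the only nonzero off-diagonal transitions are $T_a^{a+i_{ji}}=\mu/(n-1)$, $j\ne i$, with $T_a^a=1-\mu$. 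Hence $H(T_a)=-(1-\mu)\log(1-\mu)-\mu\log\bigl(\mu/(n-1)\bigr)\to 0$ as $\mu\to 0$.

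Step (ii) is the crux, and I would establish it by a Markov-chain perturbation (stochastic stability) argument rather than by any closed form, which is unavailable for general landscapes. The transition probabilities are continuous (indeed affine) in $\mu$, so $T^{(\mu)}\to T^{(0)}$ entrywise, where $T^{(0)}$ is the mutation-free incentive process. Each vertex is absorbing for $T^{(0)}$: at $Ne_i$ one gets $p_i=1$, $p_j=0$ for $j\ne i$, and $\bar a_k=0$ for $k\ne i$, which kills every off-diagonal transition. Conversely — and here the hypotheses of \cite{fudenberg2008monotone} on the landscape and incentive are used — every non-vertex state is transient for $T^{(0)}$, because from such a state some positive coordinate can be decreased with positive probability, so iterating produces a positive-probability path to a vertex with no return. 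Thus the recurrent classes of $T^{(0)}$ are exactly the singletons $\{Ne_i\}$, so every stationary distribution of $T^{(0)}$ is supported on $V$. Finally, by compactness of the probability simplex on $\Sigma$, any subsequential limit $s^{*}$ of $s^{(\mu)}$ as $\mu\to 0$ satisfies $s^{*}T^{(0)}=s^{*}$ (pass to the limit in $s^{(\mu)}T^{(\mu)}=s^{(\mu)}$), hence is supported on $V$; therefore $\sum_{a\in V}s^{(\mu)}_a\to 1$, which is (ii).

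The main obstacle is precisely the transience claim inside Step (ii): verifying that, for the $q$-replicator and $q$-Fermi incentives on linear landscapes, every non-vertex state of the mutation-free chain reaches some vertex with positive probability. For the $q$-Fermi this is immediate, since $\exp(\beta f_j)>0$ keeps $p_j>0$ for every type present in the population; for the $q$-replicator it requires the fitness-positivity/monotonicity conditions of \cite{fudenberg2008monotone} to exclude degenerate states where the reproduction distribution or the incentive denominator vanishes — which is exactly why the theorem is stated only for these incentives. Everything else — the uniform bound $C$, the vertex-entropy computation, and the limit-of-stationary-distributions argument — is routine.
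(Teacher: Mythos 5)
Your proof is correct and follows the same skeleton as the paper's: compute the transition entropy at the monomorphic (vertex) states, where the transition distribution is $(1-\mu,\mu/(n-1),\ldots,\mu/(n-1))$ and the entropy vanishes with $\mu$, and show that the stationary mass concentrates on those states so that nothing else contributes in the limit. The difference is in how the concentration step is handled. The paper simply cites \cite{fudenberg2008monotone} for the fact that the stationary distribution collapses onto the absorbing states of the mutation-free chain; you instead prove it directly by a standard stochastic-stability argument -- continuity of $T^{(\mu)}$ in $\mu$, compactness of the simplex of distributions on the finite state space, passage to the limit in $s^{(\mu)}T^{(\mu)}=s^{(\mu)}$, and identification of the recurrent classes of $T^{(0)}$ as the vertex singletons -- together with the explicit uniform bound $H(T_a)\le\log\bigl(n(n-1)+1\bigr)$ that makes the splitting of the sum rigorous. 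Your version is self-contained and arguably cleaner on one point: the paper speaks of concentration on ``boundary states'' and then evaluates entropies only at the corners, whereas your argument shows directly that the limit distribution is supported on the corners. The one place where you genuinely still lean on the hypotheses of \cite{fudenberg2008monotone} is the transience of non-vertex states for the $q$-replicator with $q\ge 1$ (positivity of the incentive for every type present, so that a positive-probability path to a vertex exists); you flag this honestly, and it is exactly the same dependence the paper has. A further small caveat for both arguments: at a vertex $Ne_i$ the normalized incentive is $e_i$ only if $\varphi_i(e_i)\neq 0$ (e.g. $A_{ii}\neq 0$ for the $q$-replicator), which is covered by the paper's standing well-definedness assumptions.
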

\begin{proof}
For these incentives, we can consider the incentive process as a special case of the frequency-dependent Moran process \cite{nowak2004emergence} with modified fitness landscapes given by $\bar{a_i}^{q-1} f_i(x)$. It is shown in \cite{fudenberg2008monotone} that as the mutation rates $\mu \to 0$, the stationary distribution tends to zero except for the boundary states, where the stationary distribution is concentrated. The entropy at the corner states is given solely in terms of $\mu$. Specifically, we have that the transition distribution is (up to reordering) $(1-\mu, \mu / (n-1), \ldots, \mu / (n-1))$, where there are $n$ terms overall. The entropy of these distributions is $-H = (1-\mu) \log{(1-\mu)} + \mu \log{(\mu / (n-1))}$ which also limits to zero as $\mu$ does. We have shown that every term in the entropy rate limits to zero.
\end{proof}

Theorem \ref{theorem_mu_to_zero} shows that the analogous result for $n=2$ for the Moran process in \cite{harper2013inherent} generalizes to all $n$ for the Moran and Fermi processes. The latter case is crucial for $n>2$ processes since cycling behavior can lead to average and total fitnesses equal to zero, which would be ill-defined for the Moran process. The theorem, however, does not apply to the case $q=0$, which for the $q$-replicator and $q$-Fermi incentives correspond to the projection and logit dynamics. These cases violate the assumption in \cite{fudenberg2008monotone} that absent strategies ($a_i=0$) will never be re-introduced when $\mu = 0$. Direct calculations indicate that the stationary distribution does not tend to the boundary for $q=0$, $n=2$, and the neutral fitness landscape (the stationary distribution can be computed exactly in these cases). We will give a detailed example for the best-reply incentive in the applications.

\subsection{Strict Upper Bound}

Having shown the lower bound of zero for the entropy rate in the limit of vanishing mutation probabilities, we now show that the entropy rate of the incentive process is bounded above and that the upper bound is realized in the limit of large population size. The case for dimension $n=2$ was shown previously in \cite{harper2013inherent}. We prove a more general result for all Markov processes of a particular form, which includes some P\'olya urn models (see \cite{khare2009rates}).

\begin{theorem}
Let $x = (x_1, \ldots, x_n)$ and $y = (y_1, \ldots, y_n)$ be discrete probability distributions. Define a discrete probability distribution $z$ by $z_{i j} = x_i y_j$ for $1 \leq i,j \leq n$, $i \neq j$, and $z_0 = 1 - \sum_{i,j}{z_{ij}}$. (There are $n(n-1) + 1$ terms of $z$.) Then the maximum entropy of $z$ occurs when $x=y$ (for either fixed) and $z$ is globally maximized when $x_i=1/n=y_i$ for all $i$. This maximum is
\[ H(z) \leq \frac{2n-1}{n} \log{n}\]
\label{incentive_bound}
\end{theorem}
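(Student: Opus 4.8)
The plan is to treat this as a constrained optimization: maximize $H(z)$ over the compact set $\Delta_{n-1}\times\Delta_{n-1}$ of pairs $(x,y)$, where $z=z(x,y)$ is the induced $(n(n-1)+1)$-atom distribution. Two structural remarks will do most of the bookkeeping. First, for fixed $y$ each $z_{ij}=x_iy_j$ and $z_0=1-\sum_{i\neq j}x_iy_j$ is an \emph{affine} function of $x$, so — $H$ being concave on a simplex — $H(z)$ is concave in $x$ for each fixed $y$, and symmetrically concave in $y$ for each fixed $x$. Second, $z$ is the pushforward of the product distribution $x\otimes y$ on $[n]\times[n]$ under the map that fixes off-diagonal pairs and collapses the diagonal to the single symbol $0$; the chain rule for entropy then gives the identity
\[
H(z)=H(x)+H(y)-S\,H(c),\qquad S:=\sum_i x_iy_i,\quad c:=(x_iy_i/S)_i,
\]
equivalently $H(z)=h(S)+(1-S)H(\tilde z)$ with $h$ the binary entropy and $\tilde z$ the conditional distribution off $0$. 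This makes the candidate optimum transparent: at $x=y=(1/n,\dots,1/n)$ one has $S=1/n$ and $c$ uniform, whence $z$ puts mass $1/n^2$ on each of the $n(n-1)$ off-diagonal atoms and $1/n$ on $0$, so $H(z)=\tfrac{n-1}{n}\cdot 2\log n+\tfrac1n\log n=\tfrac{2n-1}{n}\log n$.

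Next I would pin down the maximizer. Differentiating, $\partial H(z)/\partial x_k=-(1-y_k)\log x_k+y_k\log y_k-y_k\log S+H(y)-1$, and the leading term $-(1-y_k)\log x_k$ tends to $+\infty$ as $x_k\to 0^+$ provided $y_k<1$. Any candidate with some $y_k=1$ (that is, $y$ a vertex) is immediately suboptimal, since then $H(z)$ degenerates to (essentially) $H(x)\le\log n<\tfrac{2n-1}{n}\log n$; away from that degenerate case, the divergence of the derivative shows the maximum cannot sit on the boundary of either simplex, as shifting an infinitesimal amount of mass onto a vanishing coordinate strictly increases $H(z)$. Hence the maximum is interior and solves $\nabla_xH(z)=\lambda\mathbf 1$, $\nabla_yH(z)=\mu\mathbf 1$; writing these out gives, for every $k$,
\[
-(1-y_k)\log x_k+y_k\log y_k-y_k\log S=A,\qquad -(1-x_k)\log y_k+x_k\log x_k-x_k\log S=B,
\]
with $A,B$ constants, and one checks directly that $x=y=(1/n,\dots,1/n)$, $S=1/n$, $A=B=\tfrac{n-1}{n}\log n$ is a solution.

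The crux — and the step I expect to be genuinely delicate — is that this is the \emph{only} interior critical point, so that it is the global maximum and the equality case is as claimed. I would attack the stationarity system head-on: taking the difference and the sum of the two displayed equations produces, for all $k$, relations of the shape $\log(y_k/x_k)+(x_k-y_k)\log\!\big(S/(x_ky_k)\big)=A-B$ and $(x_k+y_k-1)\log(x_ky_k)-(x_k+y_k)\log S=A+B$, and the goal is to extract from the strict convexity and monotonicity of the one-variable functions occurring here, first, that no asymmetric solution exists ($x_k=y_k$ for all $k$), and then that on the diagonal the equation $-(1-x_k)\log x_k+x_k\log x_k-x_k\log S=A$ forces $x_k\equiv1/n$ — the latter because $t\mapsto-\log t+2t\log t-t\log S$ is strictly convex, hence the $x_k$ take at most two values, and the genuinely two-valued configurations are ruled out using $\sum x_k=1$ together with $S=\sum x_k^2$. (One convenient partial reduction to record en route: $D(z\,\|\,z^\ast)\ge 0$ for the candidate optimal distribution $z^\ast$ gives at once $H(z)\le(2-S)\log n$, which already settles every pair with $S\ge 1/n$, so only the regime $S<1/n$ — where $x$ and $y$ are forced apart — actually needs the variational argument.) With uniqueness in hand, evaluating $H$ at $x=y=(1/n,\dots,1/n)$ as above yields $H(z)\le\tfrac{2n-1}{n}\log n$ with equality exactly there.
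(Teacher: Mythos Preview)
Your approach is, at its core, the same as the paper's: set up Lagrange multipliers on $\Delta_{n-1}\times\Delta_{n-1}$ and argue that the only critical point is the uniform pair. The paper's proof is extremely terse---it simply asserts that ``a tedious but straightforward calculation yields $x_i=y_i$ for all $i$, which then leads to $x_i=1/n$''---whereas you actually write out the first-order conditions, rule out boundary maxima, and verify the uniform point solves the system. You also add two pieces of structure the paper does not have: the chain-rule identity $H(z)=H(x)+H(y)-S\,H(c)$, which makes the candidate value transparent, and the relative-entropy bound $H(z)\le(2-S)\log n$, which dispatches the entire region $S\ge 1/n$ without any calculus. Those are genuine improvements.

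The gap you flag is real and is shared with the paper: neither argument actually establishes uniqueness of the interior critical point. The paper hides this behind ``tedious but straightforward''; you are more honest and sketch a route via the difference and sum of the two stationarity equations. That route is plausible, but the details you gesture at are not routine. In particular, biconcavity (concave in $x$ for fixed $y$ and vice versa) does \emph{not} by itself force a unique critical point, so it cannot substitute for the algebra; and on the diagonal $x=y$, your reduction to ``the $x_k$ take at most two values, then rule out two values using $\sum x_k=1$ and $S=\sum x_k^2$'' needs a genuine argument, since those two scalar constraints do not by themselves exclude a nontrivial two-level configuration---you still have to feed the specific form of the stationarity equation back in. If you want to close this cleanly, one workable line is: on the diagonal the first-order condition reads $(2x_k-1)\log x_k - x_k\log S = \text{const}$; show that $t\mapsto (2t-1)\log t - t\log S$ is strictly convex on $(0,1)$ for the relevant $S$, so level sets have at most two points, and then check directly that a genuine two-value solution forces an inconsistency with both constraints and the second (symmetric) stationarity equation simultaneously. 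The off-diagonal case ($x\neq y$) is the harder one, and your difference equation $\log(y_k/x_k)+(x_k-y_k)\log\!\big(S/(x_ky_k)\big)=A-B$ is the right object, but extracting $x_k=y_k$ from it requires an additional monotonicity argument you have not yet supplied.
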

\begin{proof}
Intuitively, the maximum entropy should occur when $x_i = \frac{1}{n} = y_i$ for all $1 \leq i \leq n$. This choice of $x$ and $y$ gives $z_{i j} = 1/n^2$ and $z_0 = 1/n$, which has entropy given by the bound above. To prove that this is in fact the maximum possible entropy, use Lagrange multipliers with the constraints that $x_1 + \ldots + x_n = 1$, $y_1 + \ldots + y_n = 1$. A tedious but straightforward calculation yields $x_i = y_i$ for all $i$, which then leads to $x_i = 1/n$ for all $i$.
\end{proof}

Since we have that the transition probabilities for the incentive process are of the form defined in Theorem \ref{incentive_bound}, and the entropy rate is bounded by the largest entropy of the transition distributions, we have the following corollary, which applies to any incentive and mutation matrix (so long as the stationary distribution exists).

\begin{corollary}
The maximum entropy rate for the incentive process for any incentive and any mutation matrix on $n$ types is 
\[ H(z) \leq \frac{2n-1}{n} \log{n}.\]
\end{corollary}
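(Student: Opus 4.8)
The plan is to obtain the corollary as an immediate consequence of Theorem~\ref{incentive_bound}, by observing that at every population state $a$ the transition distribution $T_a$ is precisely of the form of the distribution $z$ appearing in that theorem. Recall from \eqref{incentive_process} that the only state-changing transitions out of $a$ are $T_{a}^{a+i_{jk}} = p_{j}(\bar a)\,\bar a_{k}$ for $j\neq k$, together with the holding probability $T_{a}^{a} = 1 - \sum_{b\text{ adj }a,\,b\neq a}T_{a}^{b}$. So first I would set $x = p(\bar a) = M(\bar a)\bar\incentive(\bar a)$ and $y = \bar a$. Both are genuine discrete probability distributions on $n$ symbols: $\bar a = a/N$ has nonnegative entries summing to $1$, while $p(\bar a)$ has nonnegative entries summing to $1$ because $M(\bar a)$ is a (row- or column-, per the paper's convention) stochastic matrix applied to the probability vector $\bar\incentive(\bar a)$ — this is exactly the observation already used in the proof of Proposition~1, and it holds for any incentive and any admissible mutation matrix, including state-dependent ones.

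Next I would match indices: with these choices, $z_{jk} := x_{j}y_{k} = p_{j}(\bar a)\,\bar a_{k} = T_{a}^{a+i_{jk}}$ for $j\neq k$, and hence $z_{0} = 1 - \sum_{j\neq k}z_{jk} = T_{a}^{a} \ge 0$, so the hypotheses of Theorem~\ref{incentive_bound} are satisfied and $H(T_{a}) = H(z)$. Theorem~\ref{incentive_bound} then gives $H(T_{a}) \le \frac{2n-1}{n}\log n$, and crucially this bound depends only on $n$, not on $a$, on the incentive $\incentive$, or on $M$. Finally, averaging over the stationary distribution, $H(P) = \sum_{a}s_{a}H(T_{a}) \le \bigl(\max_{a}H(T_{a})\bigr)\sum_{a}s_{a} \le \frac{2n-1}{n}\log n$, since $s$ is a probability distribution; this completes the proof.

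There is no substantial obstacle here — all the analytic content is in Theorem~\ref{incentive_bound}, which is assumed. The only points requiring a modicum of care are (i) verifying that the reproduction-with-mutation vector $p(\bar a)$ is a bona fide probability vector for an \emph{arbitrary} (possibly state-dependent) mutation matrix, and (ii) lining up the ``$i\neq j$'' exclusion in the definition of $z$ with the ``$j\neq k$'' condition in \eqref{incentive_process}, i.e.\ making sure the $n(n-1)+1$ coordinates of $z$ are put in bijection with the $n(n-1)$ state-changing transitions plus the single holding transition. Once that bookkeeping is in place, the estimate and the averaging step are routine.
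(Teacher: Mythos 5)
Your proposal is correct and is essentially the argument the paper itself gives (in the sentence immediately preceding the corollary): identify $x = p(\bar a)$ and $y = \bar a$ so that each transition distribution $T_a$ has the form of $z$ in Theorem~\ref{incentive_bound}, then bound the entropy rate by the maximum transition entropy via averaging over the stationary distribution. Your extra care about $p(\bar a)$ being a probability vector for state-dependent $M$ and about the index bookkeeping is sound but adds nothing beyond what the paper implicitly relies on.
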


The case for the incentive process for $n=2$ was done in the appendix of \cite{harper2013inherent}, in which the explicit value $(3/2) \log{2}$ was found to be the maximum.  The maximum attainable entropy for a probability distribution of length $n(n-1) +1$ is simply $\log{(n(n-1) + 1)}$, (e.g. if the stationary distribution and all transition distributions were uniform), and so this also bounds the entropy rate of Markov processes with transition distributions of the same length. Theorem \ref{incentive_bound} shows that for the incentive process, the bound is less than the theoretical bound for processes on the same state space. The proportion of the theoretical maximum attainable, as a function of $n$, is
\[ \frac{2n - 1}{n} \frac{\log{n}}{\log{(n (n-1) + 1)}} \to 1,\]
as $n \to \infty$. For $n=2$, the proportion is already $\approx 94\%$. While the bound is lower than the maximum for any finite $n$, it is not much lower. The reason for the bounding is simply that the transition probabilities, by their definition, cannot all be exactly equal to $(1/n, \ldots, 1/n)$.

\subsection{Neutral Selection}

In this section we consider only the neutral fitness landscape for the Moran and Fermi processes (which are equal in this case). The neutral landscape is a useful as a reference case and for eliminating the action of selection, and has an available stationary distribution for all $n$. We start with the following corollary of Theorem \ref{incentive_bound}, which allows us to show that the entropy rate is monotonic in $\mu$. This demonstrates that for fixed drift and no selection, the long-run behavior is controlled by the mutation rate. Intuitively, the long-run variation is monotonic in the mutation rate.

\begin{corollary}
Suppose that $n$ divides $N$. For any $\mu$, the transition entropy is largest at the central state $(N/n, \ldots, N/n)$. If $n$ does not divide $N$, then the entropy is maximal at any permutation of $(c,c,\ldots, N - (n-1)c)$ with $c = \lfloor \frac{N}{n} \rfloor$, i.e. the central states.
\label{max_transition_entropy}
\end{corollary}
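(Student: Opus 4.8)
The plan is to specialize the transition distribution of Section~2 to the neutral landscape, recognize it as an instance of the distribution $z$ of Theorem~\ref{incentive_bound}, and then use that global bound when $n\mid N$ and a Schur-concavity argument when $n\nmid N$. For the neutral incentive $\incentive_i(\bar a)=\bar a_i$ we have $\bar\incentive(\bar a)=\bar a$, so by \eqref{incentive-proportionate-reproduction} the birth distribution is the affine map
\[ p_i(\bar a)=\frac{\mu}{n-1}+\frac{n-1-n\mu}{n-1}\,\bar a_i=\frac{n\mu}{n-1}\cdot\frac1n+\Bigl(1-\frac{n\mu}{n-1}\Bigr)\bar a_i; \]
thus $p(\bar a)$ is a fixed affine combination of the uniform distribution $u=(1/n,\dots,1/n)$ and $\bar a$, and in particular $p(u)=u$ for every $\mu$. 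By \eqref{incentive_process} the transition distribution at $a$ is exactly the distribution $z$ of Theorem~\ref{incentive_bound} with $x=p(\bar a)$ and $y=\bar a$: $z_{jk}=p_j(\bar a)\bar a_k$ for $j\neq k$ and $z_0=T_a^a=\sum_i p_i(\bar a)\bar a_i$. Viewing $z$ as the law of the deterministic function of a pair of independent draws $J\sim x$, $K\sim y$ that returns $(J,K)$ when $J\neq K$ and a fixed symbol otherwise, a short conditional-entropy computation yields the identity
\[ H(T_a)=H(x)+H(y)-z_0\,H(\hat d),\qquad \hat d_i=x_i y_i/z_0. \]

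If $n\mid N$ the corollary is immediate. The central state $a=(N/n,\dots,N/n)$ has $\bar a=u$, hence $x=y=u$, and Theorem~\ref{incentive_bound} identifies this pair as the global maximizer of $H(z)$ over all $(x,y)$, with value $\tfrac{2n-1}{n}\log n$ --- and, by the Lagrange computation in its proof, the only one. Hence $H(T_a)\ge H(T_b)$ for every lattice state $b$, with equality only when $(p(\bar b),\bar b)=(u,u)$, i.e.\ only at the central state.

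If $n\nmid N$, no lattice state equals $u$, so the plan is instead to prove that $g_\mu(\bar a):=H(T_a)$ is a symmetric Schur-concave function of $\bar a$ on the simplex, for each fixed $\mu$. A symmetric Schur-concave function then attains its maximum over the lattice $\tfrac1N\mathbb Z_{\ge0}^n\cap\Delta_{n-1}$ at the majorization-minimal lattice points --- those whose coordinates differ by at most one, i.e.\ the central states --- since these are majorized by every lattice state. Symmetry of $g_\mu$ is clear from the construction of $z$. Because $\bar a\mapsto(p(\bar a),\bar a)$ is affine, Schur's criterion reduces to the inequality $(\bar a_i-\bar a_j)\bigl(\partial_{\bar a_i}g_\mu-\partial_{\bar a_j}g_\mu\bigr)\le 0$ on the open simplex (the boundary following by continuity, since majorization is preserved under $v\mapsto(1-\varepsilon)v+\varepsilon u$). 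Differentiating the identity above and using $\partial p_k/\partial\bar a_\ell=\tfrac{n-1-n\mu}{n-1}\,\delta_{k\ell}$, the bracketed difference turns out to involve only the two coordinates $\bar a_i,\bar a_j$ together with $z_0$, and $z_0$ enters only as a factor multiplying $\bar a_i-\bar a_j$, so the criterion becomes a genuine two-variable inequality.

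That two-variable inequality is the main obstacle, and the difficulty is concentrated entirely in the term $-z_0H(\hat d)$. The structural fact that makes everything else cooperate is that $p$ is monotone for the majorization order --- $\bar a\succ\bar b$ implies $p(\bar a)\succ p(\bar b)$, because $p$ is an affine image $c\mathbf{1}+\beta\bar a$ of $\bar a$ --- so a balancing (Robin--Hood) transfer on $\bar a$ induces balancing transfers on $p(\bar a)$, on $\bar a$, and on the diagonal vector $(p_i(\bar a)\bar a_i)_i$, and it lowers $z_0$, all of which raise $H(x)+H(y)$. The same transfer, however, tends to raise $H(\hat d)$, because renormalizing by the shrinking $z_0$ spreads $\hat d$ out; so $z_0$ and $H(\hat d)$ move oppositely, and what the two-variable inequality has to decide is precisely that the product $z_0H(\hat d)$ does not increase --- uniformly over all $\mu$, including the flat case $\mu=(n-1)/n$ where $p\equiv u$ and $g_\mu=\log n+\tfrac{n-1}{n}H(\bar a)$ is trivially Schur-concave. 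I expect this single inequality to be essentially all of the remaining work.
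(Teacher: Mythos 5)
Your reduction of the transition distribution at $a$ to the $z$ of Theorem~\ref{incentive_bound} with $x=p(\bar a)$ and $y=\bar a$, and your treatment of the case $n\mid N$, are correct and essentially what the paper does: the central state is the unique lattice state with $p(\bar a)=\bar a=(1/n,\dots,1/n)$, which Theorem~\ref{incentive_bound} identifies as the global maximizer of $H(z)$. Your conditional-entropy identity $H(T_a)=H(x)+H(y)-z_0H(\hat d)$ is also correct (condition the independent pair $(J,K)$ on the coarsened variable). For reference, the paper's own proof is only the one-line verification that the fixed-point condition $\bar a_i(1-\mu)+\tfrac{\mu}{n-1}(1-\bar a_i)=\bar a_i$ forces $a_i=N/n$; it does not argue the non-divisible case at all.

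The genuine gap is that for $n\nmid N$ you have a plan rather than a proof: the Schur-concavity of $\bar a\mapsto H(T_a)$ hinges on the ``two-variable inequality'' that a Robin--Hood transfer does not increase $z_0H(\hat d)$, and you explicitly leave this unproved. Since that inequality is the entire content of the non-divisible half, the proposal does not establish it. There is also a mismatch you should flag: if the Schur-concavity argument goes through, it locates the maximum at the lattice states whose coordinates differ by at most one, i.e.\ permutations of $(q+1,\dots,q+1,q,\dots,q)$ where $N=qn+r$ with $0<r<n$; for $r\geq 2$ this is \emph{not} the set of permutations of $(c,\dots,c,N-(n-1)c)$ named in the corollary. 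For example, $N=5$, $n=3$ gives $(1,1,3)$, which majorizes the balanced state $(2,2,1)$, so a Schur-concave objective is (generically strictly) larger at the latter. Your route, once completed, would therefore prove a corrected version of the statement rather than the literal one --- worth saying explicitly, since it indicates the corollary's description of the ``central states'' is itself imprecise when $N\bmod n\geq 2$.
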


\begin{proof}
By Theorem \ref{incentive_bound}, the entropy of the transition distribution at a state $\bar{a}$ is maximal when $\bar{a}_i (1 - \mu) + \mu / (n-1) (1-\bar{a}_i) = \bar{a}_i$, and so $a_i = N/n$ for all $i$, the central state.
\end{proof}

\begin{theorem}
For $n$ types and population size $N$ fixed, the entropy rate of the neutral landscape is monotonically increasing in $\mu$.
\label{theorem_mononotic_mu}
\end{theorem}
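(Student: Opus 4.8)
The approach is to specialise to the neutral landscape and reparametrise by $\alpha=N\mu/(n-1-n\mu)$ from \eqref{dirichlet_multinomial}, which is a strictly increasing bijection of $[0,(n-1)/n)$ onto $[0,\infty)$; it thus suffices to show $H(P)$ is nondecreasing in $\alpha$. For neutral fitness the incentive is the identity, $\bar{\incentive}(\bar a)=\bar a$, and a short calculation gives $p_i(\bar a)=(a_i+\alpha)/(N+n\alpha)$, whence
\[
T_a^{a+i_{jk}} \;=\; \frac{(a_j+\alpha)\,a_k}{(N+n\alpha)\,N}\qquad (j\neq k),
\]
and the stationary distribution is the Dirichlet--multinomial $s=s(\alpha)$ of \eqref{dirichlet_multinomial}. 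Two features will drive the proof. (a) For $\alpha_1<\alpha_2$ the transition kernel is a \emph{state--independent} affine mixture $T^{(\alpha_2)}=(1-t)T^{(\alpha_1)}+tT^{(\infty)}$ with $t=n(\alpha_2-\alpha_1)/(N+n\alpha_2)\in[0,1)$ and $T^{(\infty)}$ the kernel with uniform reproduction, so by concavity of entropy $H(T_a)(\alpha_2)\ge(1-t)H(T_a)(\alpha_1)+tH(T_a)(\infty)$ for each $a$. (b) The family $s(\alpha)$ concentrates monotonically toward the central state(s) as $\alpha$ grows: $s_a(\alpha_2)/s_a(\alpha_1)$ equals a positive constant times $\prod_i (\alpha_2)_{a_i}/(\alpha_1)_{a_i}$, and since $m\mapsto(\alpha_2)_m/(\alpha_1)_m$ is log--concave (its increments $(\alpha_2+m)/(\alpha_1+m)$ decrease in $m$), this ratio is a Schur--concave function of $a$.

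The difficulty is that (a) and (b) must be played against each other, since neither works termwise. Differentiating $H(P)(\alpha)=\sum_a s_a(\alpha)H(T_a)(\alpha)$ splits the derivative into a transition--entropy term $\sum_a s_a\,\tfrac{d}{d\alpha}H(T_a)$ and a stationary--shift term $\sum_a(\tfrac{d}{d\alpha}s_a)H(T_a)$. Each $H(T_a)$ is affine in $\lambda=n\alpha/(N+n\alpha)$ at fixed $\bar a$, hence concave in $\lambda$ and (as $\lambda$ is concave increasing in $\alpha$) concave in $\alpha$, and by Corollary \ref{max_transition_entropy} is maximised over states at the centre, where it equals the constant $\tfrac{2n-1}{n}\log n$. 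But $H(T_a)(\alpha)$ is \emph{not} monotone in $\alpha$ at every state: for a near--corner state such as $a=(N-1,1,0,\dots,0)$ with $N$ large one finds $\tfrac{d}{d\alpha}H(T_a)<0$ for large $\alpha$, so the transition--entropy term need not be nonnegative. The point is that precisely these misbehaving states carry $s_a(\alpha)$--mass that is exponentially small and, by (b), decreasing in $\alpha$; showing this cannot overturn the genuinely increasing bulk contribution, uniformly in $\alpha$, is the main obstacle.

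Concretely I would: (i) turn (b) into $\mathbb E_{s(\alpha_2)}[g]\ge\mathbb E_{s(\alpha_1)}[g]$ for symmetric Schur--concave $g$, via a correlation inequality for Schur--concave functions under the exchangeable stationary law, or — more robustly — by a direct mass--transport comparison of $s(\alpha_1)$ with $s(\alpha_2)$ along balance--preserving transfers toward the centre; (ii) using Theorem \ref{incentive_bound} and Corollary \ref{max_transition_entropy}, show that at fixed $\alpha$ the map $a\mapsto H(T_a)$ is Schur--concave (or at least order--compatible with (b) on a central ``bulk'' carrying all but $s(\alpha)$--negligible mass), on which $H(T_a)(\alpha)$ is also increasing in $\alpha$; (iii) control the complementary ``corner'' region crudely, using explicit Dirichlet--multinomial tail bounds on $s_a(\alpha)$ and $\tfrac{d}{d\alpha}s_a(\alpha)$ against $H(T_a)\le\tfrac{2n-1}{n}\log n$; then combine (i)--(iii) with the mixture bound of (a) to conclude $\tfrac{d}{d\alpha}H(P)\ge 0$. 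I expect step (iii) — making ``negligible'' quantitative enough to dominate the corner terms for all $\alpha$ at once — to be where the real work lies. For $n=2$ this all collapses to a one--dimensional estimate over $a\in\{0,1,\dots,N\}$ doable by hand (recovering the case of \cite{harper2013inherent}), with majorization replacing the scalar order for general $n$.
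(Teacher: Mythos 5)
Your proposal is a programme, not a proof: the decisive steps are all deferred. Step (i) needs a correlation inequality of the form $\mathbb{E}_{s(\alpha_2)}[g]\ge\mathbb{E}_{s(\alpha_1)}[g]$ for Schur-concave $g$; your observation that the likelihood ratio $s_a(\alpha_2)/s_a(\alpha_1)$ is Schur-concave in $a$ is correct and useful, and for $n=2$ the desired inequality does follow from Chebyshev's sum inequality in the scalar variable $|a_1-N/2|$, but for $n\ge 3$ ``two Schur-concave functions are positively correlated under an exchangeable law'' is not a standard fact and you do not prove it. Step (ii), the Schur-concavity of $a\mapsto H(T_a)$, does not follow from Theorem \ref{incentive_bound} or Corollary \ref{max_transition_entropy}, which only locate the maximizing state; in fact the Lagrange-multiplier claim in Theorem \ref{incentive_bound} that $H(z)$ is maximized at $x=y$ for \emph{fixed} $y$ is itself only an interior-critical-point assertion (a direct check at $y=(0.9,0.1)$, $n=2$ shows the maximum over $x$ is not at $x=y$), so you cannot lean on it to order the transition entropies along majorization. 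Step (iii) you explicitly flag as ``where the real work lies.'' Finally, your mixture bound (a) gives $H(T_a)(\alpha_2)\ge(1-t)H(T_a)(\alpha_1)+tH(T_a)(\infty)$, which yields monotonicity in $\alpha$ only at states where $H(T_a)(\infty)\ge H(T_a)(\alpha_1)$; away from the center this fails, so the transition-entropy half of your derivative decomposition has uncontrolled sign even on the ``bulk,'' and nothing in the proposal quantifies how the stationary-shift term dominates it.

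For comparison, the paper's argument is much shorter and takes the opposite tack: it treats the transition entropies as a symmetric score that is maximal at the central state and decreasing outward, and argues that increasing $\mu$ monotonically concentrates the (symmetric, centrally peaked) stationary distribution on the center, verifying this by computing that $\partial_\mu \log s_b>0$ at the central state $b$ via the explicit Dirichlet-multinomial form. Your diagnosis that the transition entropies themselves vary with $\mu$ and are not monotone state-by-state is a genuine point that the paper's proof does not address, and your $\alpha$-parametrization with $p_i(\bar a)=(a_i+\alpha)/(N+n\alpha)$ and the Schur-concave likelihood ratio are the right raw materials for a rigorous treatment. But as written the proposal identifies the obstacles without overcoming any of them, so it does not establish the theorem.
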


\begin{proof}
Since the transition entropies are symmetric and decreasing outward from the central state, we need to maximize the concentration of the stationary distribution on the center of the simplex. As $\mu$ increases, the stationary distribution is more highly concentrated on the central states. This is because the stationary distribution is symmetric and maximal at the central state, and decreasing toward the boundary states. We have that the derivative with respect to $\mu$ of the logarithm of the stationary distribution at the central state is proportional to 
\[ \sum_{i=0}^{\lfloor \frac{N}{n} \rfloor -1}{\frac{1}{\alpha + i}} \frac{\partial \alpha}{\partial \mu} = \sum_{i=0}^{\lfloor \frac{N}{n} \rfloor-1}{\frac{1}{\alpha + i}} \frac{N(n-1)}{(n-1 -n\mu)^2} > 0.\]
\end{proof}

It was shown in \cite{harper2013inherent} that for the Moran process and the neutral fitness landscape, the entropy rate approaches the bound in the large population limit. This is the case for higher dimensions as well, and shows that in the absence of selection and fixed mutation rate, drift controls the behavior of the population.

\begin{theorem}
Define $E(N,n)$ to be the entropy rate of the Moran and Fermi processes ($q=1$) on the neutral fitness landscape, with population size $N$ and mutation probability $\mu$ fixed. Then for as $N \to \infty$ the entropy rate approaches the maximum:
\[ \lim_{N \to \infty}{E(N, n)} = \frac{2n-1}{n} \log{n}.\]
\end{theorem}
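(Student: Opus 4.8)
The plan is to show that as $N \to \infty$, the stationary distribution \eqref{dirichlet_multinomial} concentrates (in the appropriate sense) so that the entropy rate $E(N,n) = \sum_a s_a H(T_a)$ converges to the maximal transition entropy $\frac{2n-1}{n}\log n$ identified in Theorem \ref{incentive_bound}. Since every transition entropy $H(T_a)$ is bounded above by $\frac{2n-1}{n}\log n$ and the $s_a$ sum to $1$, we immediately get $\limsup_N E(N,n) \le \frac{2n-1}{n}\log n$; the work is entirely in the matching lower bound. First I would fix $\varepsilon > 0$ and let $B_{\delta,N}$ be the set of population states $a$ whose normalized distribution $\bar a = a/N$ lies within $\delta$ of the center $(1/n,\ldots,1/n)$. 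Because the transition probabilities $T_a^{a+i_{jk}} = p_j(\bar a)\bar a_k$ are continuous functions of $\bar a$ (with $p_j(\bar a) = \bar a_j(1-\mu) + \frac{\mu}{n-1}(1-\bar a_j)$ for the neutral landscape) and the entropy function is continuous, there is a $\delta = \delta(\varepsilon)$ such that $H(T_a) \ge \frac{2n-1}{n}\log n - \varepsilon$ whenever $\bar a \in B_{\delta,N}$, uniformly in $N$. It then suffices to show $\sum_{a \in B_{\delta,N}} s_a \to 1$ as $N \to \infty$ for each fixed $\delta > 0$, since then $E(N,n) \ge (\frac{2n-1}{n}\log n - \varepsilon)\sum_{a\in B_{\delta,N}} s_a \to \frac{2n-1}{n}\log n - \varepsilon$, and $\varepsilon$ is arbitrary.

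The remaining claim — that the Dirichlet-multinomial stationary distribution \eqref{dirichlet_multinomial} concentrates its mass near the center as $N\to\infty$ with $\mu$ fixed — is a law-of-large-numbers statement. The key point is that with $\mu$ fixed, the Dirichlet parameter $\alpha = \frac{N\mu}{n-1-n\mu}$ grows linearly in $N$. A clean way to see the concentration: sampling $a$ from \eqref{dirichlet_multinomial} is equivalent to drawing $p \sim \mathrm{Dirichlet}(\alpha,\ldots,\alpha)$ and then $a \sim \mathrm{Multinomial}(N, p)$. As $\alpha \to \infty$ the Dirichlet$(\alpha,\ldots,\alpha)$ law concentrates on $(1/n,\ldots,1/n)$ (its variance is $O(1/\alpha) = O(1/N)$), and conditionally on $p$ the multinomial empirical frequencies $a/N$ are within $O(N^{-1/2})$ of $p$ with high probability. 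Combining, $\bar a \to (1/n,\ldots,1/n)$ in probability, which is exactly $\sum_{a\in B_{\delta,N}} s_a \to 1$. Alternatively one can estimate the mean and variance of $\bar a_i$ directly from the Dirichlet-multinomial moment formulas: $\mathbb{E}[\bar a_i] = 1/n$ by symmetry, and $\mathrm{Var}(\bar a_i) = \frac{1}{n^2}\cdot\frac{(n-1)(N+n\alpha)}{N(n\alpha+1)} \to 0$ since $\alpha$ is linear in $N$; Chebyshev then gives the concentration.

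I expect the main obstacle to be making the concentration estimate fully rigorous while keeping $\mu$ genuinely fixed — one must track that $\alpha$ really does diverge linearly (handling the degenerate cases $\mu = 0$, excluded by hypothesis, and $\mu = (n-1)/n$, where the formula $s_a = \binom{N}{a}n^{-N}$ is the literal multinomial and concentration is the ordinary LLN) — and then combining the ``$\bar a$ near center $\Rightarrow H(T_a)$ near maximum'' continuity step with the concentration in a uniform-in-$N$ way. Once those two ingredients are in hand, the $\limsup$ bound from Theorem \ref{incentive_bound} and the $\liminf$ bound from concentration pinch $E(N,n)$ to $\frac{2n-1}{n}\log n$, completing the proof.
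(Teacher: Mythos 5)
Your proposal is correct and follows the same basic strategy as the paper's proof: show that the stationary distribution concentrates at the center of the simplex, where the transition entropy equals the bound of Theorem \ref{incentive_bound}. The difference is one of rigor rather than route, and it is worth noting because your version repairs a real gap in the paper's argument. The paper asserts that the single central state $b=(N/n,\ldots,N/n)$ ``dominates the stationary distribution for large $N$'' via Stirling's approximation of $\binom{N}{b}$, and concludes $H(P)\to H(T_b)$; but read literally this is false, since $s_b \to 0$ as $N\to\infty$ (the mass spreads over order $N^{(n-1)/2}$ states near the center). What is actually needed is exactly what you supply: (i) the mass of a fixed relative neighborhood $B_{\delta,N}$ of the center tends to $1$, which you get from the Dirichlet-multinomial variance $\mathrm{Var}(\bar a_i)=\frac{n-1}{n^2}\frac{N+n\alpha}{N(1+n\alpha)}\to 0$ (since $\alpha$ grows linearly in $N$ for fixed $\mu$) together with Chebyshev, and (ii) uniform continuity of $\bar a \mapsto H(T_a)$ (which for the neutral landscape depends on $a$ only through $\bar a$), so that $H(T_a)$ is within $\varepsilon$ of the maximum on $B_{\delta,N}$ uniformly in $N$. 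Combined with the trivial $\limsup$ bound from Theorem \ref{incentive_bound}, this pinches the limit. The only loose end, shared with the paper, is the range of $\mu$: your Dirichlet-mixture representation and the moment formulas require $\alpha>0$, i.e.\ $0<\mu<(n-1)/n$, and the boundary case $\mu=(n-1)/n$ (where $s_a=\binom{N}{a}n^{-N}$ and the ordinary multinomial law of large numbers applies) must be, and is, handled separately; for $\mu>(n-1)/n$ the formula for $\alpha$ is negative and neither argument applies as written.
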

\begin{proof}
% For the neutral landscape, we have that the transition probabilities are given by
% $T_{a}^{a + i_{\alpha \beta}} = \bar{a}_{\alpha} \bar{a}_{\beta}$
Assume without loss of generality that $N$ is divisible by $n$. Let $\mu \neq (n-1)/n$ and let $b = (N/n, \ldots, N/n)$ be the central state. The multinomial coefficient $\binom{N}{a}$ is largest when $a=b$, and in that case, by Sterling's approximation, we have that $\binom{N}{b} \approx n^N$. It is easy to see that this term dominates the stationary distribution for large $N$ (e.g. by looking at the logarithms as before), so we have that $H(P) \to H(T_b)$ as $N \to \infty$. $H(T_b)$ is precisely the bound given in Theorem \ref{incentive_bound}. The case for $\mu = (n-1)/n$ is similar.
\end{proof}

Does any other landscape yield a larger entropy rate for any $N$?  We can exceed the entropy rate of the neutral landscape with landscapes that more tightly hold the stationary distribution about the central state. For instance, for the fitness landscape defined by the matrix $a=0=d$, $b=1=c$ ($n=2$), the stationary distribution is essentially binomial and the relationship between $\mu$ and $N$ at the central state is $\mu 2^N$ \cite{claussen2005non} \cite{harper2013inherent}. A direct computation gives an entropy rate of $\approx 1.015$ for the former and $ \approx 0.819$ for the latter (the neutral case). Direct calculation also shows that for $q=0$ and $q=1/2$ the entropy rate for the neutral landscape is greater than that of the $q=1$ case. These can be thought of as nonlinear fitness landscapes for the Moran process. %The entropy rate seems to be decreasing in $q$ for $n=2$.

% ; for the neutral landscape, the relationship is $\mu N \log N$ 

\section{Examples and Applications}

\subsection{Application: Population Size and Mutation Rate}

We have seen now that the entropy rate for the neutral landscape depends heavily on the behavior of the composite parameter $\alpha$ and its dependence on $\mu$ and $N$. Let us examine the behavior of the entropy rate of the neutral landscape when the population size and mutation rate are dependent. Let $\mu = \frac{n-1}{n} \frac{1}{N^k}$ for a scaling parameter $k$, then $\alpha = \frac{N}{n} \frac{1}{N^k - 1}$. Holding $n$ fixed, as the population size $N$ becomes large the value of $\alpha$ depends strongly on scaling parameter: for $k<1$, $\alpha \to \infty$ and so the entropy rate tends to the maximum, for $k > 1$, $\alpha \to 0$  and so does the entropy rate, and for $k=1$, $\alpha \to 1 / n$. For $\mu = \frac{n-1}{n} \frac{1}{N+1}$, $\alpha = 1/n$ for all $N$, so we briefly alter $\mu$ for convenience. Since $\alpha$ is constant with respect to $N$, the value of the stationary distribution at the central state is decreasing (as the probability spreads throughout the simplex); the transition entropy at the central state is increasing in $N$ to the maximum. The end result is that the entropy rate limits quickly to a finite value less than the maximum (depending on $n$) as the stationary distribution is nearly uniform away from the boundary. See Figure \ref{figure_1}.

\begin{figure}[h!]
    \centering
    \includegraphics[width=0.5\textwidth]{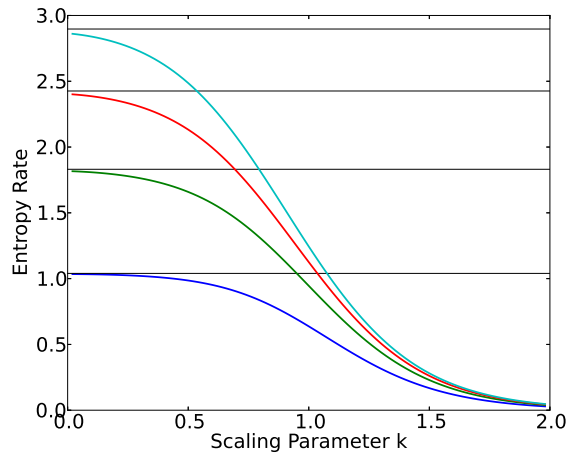}
    \caption{Entropy rate vs. scaling parameter $k$ for the neutral landscape, $n=2,3,4,5$ (bottom to top), $N=50$, $\mu = \frac{n-1}{n} \frac{1}{(N+1)^k}$. Horizontal lines indicate the maximum possible entropy rate for each $n$. Note that in accordance with Theorem \ref{theorem_mononotic_mu}, the entropy rate is monotonically decreasing with increasing $\mu$ (decreasing $k$).}
    \label{figure_1}
\end{figure} 

\subsection{Application: Strong and Weak Selection}

Consider an incentive process with the replicator or Fermi incentive with strength of selection $\beta$ and the traditional Moran landscape on two types given by $a=b=r$, $c=1=d$. For these landscapes the first type has a unilateral advantage over the second type (when $r > 1$), and the stationary distributions are skewed toward the boundary state $(N, 0)$, reducing the entropy rate (demonstrated computationally in \cite{harper2013inherent}). In the $n=2$ case the stationary distribution can be computed exactly, and we find that the entropy rate is maximal when $\beta \to 0$ or $r \to 1$, and monotonically decreasing away from $r=1$ and for $\beta > 0$, so the entropy rate respects the weak selection limit (see Figure \ref{figure_2}). Similarly, for the strong selection limit $\beta \to \infty$ we have for the Moran landscape that the process skews toward $(N,0)$, which becomes absorbing, and so the entropy rate drops to zero. Similarly for $r \to 0$ or $r \to \infty$. % Note that this behavior appears to require the stationary distribution's contribution to the entropy rate -- in this example the transition entropy at the central state is maximal for a value $0 < \beta < 1$, and not in the limit as $\beta \to 0$ for the entropy rate.
This is a very intuitive behavior for the entropy rate. As the strength of selection increases, the variation in long term outcomes should decrease, and for the Moran landscape the variation vanishes. For weak selection, the population dynamics is dominated by drift and mutation, and has some long-run variation outside of extreme cases (such as $\mu=0$). 

For other fitness landscapes, strong selection does not necessarily eliminate the entropy rate. For instance, for the landscape given by $a=1=d$, $b=2=d$, (which for the replicator dynamic is evolutionarily stable at the central point), for large $\beta$ the Fermi incentive tends to the best-reply incentive, which has stationary distribution that is strongly concentrated on the central point, and produces a nonzero entropy rate as $\beta \to \infty$. The best reply incentive is $(a_1 / N, 0)$ if type $A_1$ has higher fitness, $(0, a_2 / N)$ if type $A_2$ has higher fitness, and $(a_1 / N, a_2 / N)$ if the types have equal fitness. The value of the entropy rate is still dependent on the population size $N$ and mutation rate $\mu$. More specifically, for any $\beta$ the stationary distribution is maximal at the central point, and increasingly so for larger $\beta$ (and larger $N$). The entropy of the transition distribution, however, decreases from $H(\frac{1}{2}, \frac{1}{4}, \frac{1}{4}) = \frac{3}{2} \log{2}$ for $\beta=0$ to $H(\frac{\mu}{2}, \frac{1-\mu}{2}, \frac{1}{2})$ for very large $\beta$. This gives a nonzero entropy rate that is decreasing in $\beta$ but does not vanish completely even if $\mu \to 0$, in which case the entropy is $\log{2}$. (Note that Theorem \ref{theorem_mu_to_zero} does not apply to the best reply incentive.)

If the mutation rate is zero, why is there still variation? The best reply incentive at the central point is $(1/2, 1/2)$, and so the population will move outward from the central state in either direction with probability $1/2$. Then population then moves right back to the center with probability 1. So the uncertainty in the population state is entirely do to the coin-flip activity at the central state, which is exactly $\log 2$, the entropy rate.

% We can argue for this generally as follows. For large $\beta$, the Fermi incentive approaches the best-reply incentive, which is nonzero only for pure-type that is of greatest fitness. For the Moran landscape, this is type $A_1$ if $r > 1$ and type $A_2$ if $r<1$. This means that only two of the transition probabilities at each state are non-zero (or in the case of large but finite $\beta$, all but two are vanishingly small), and these all move in the same direction.

\begin{figure}[h!]
    \centering
    \includegraphics[width=0.5\textwidth]{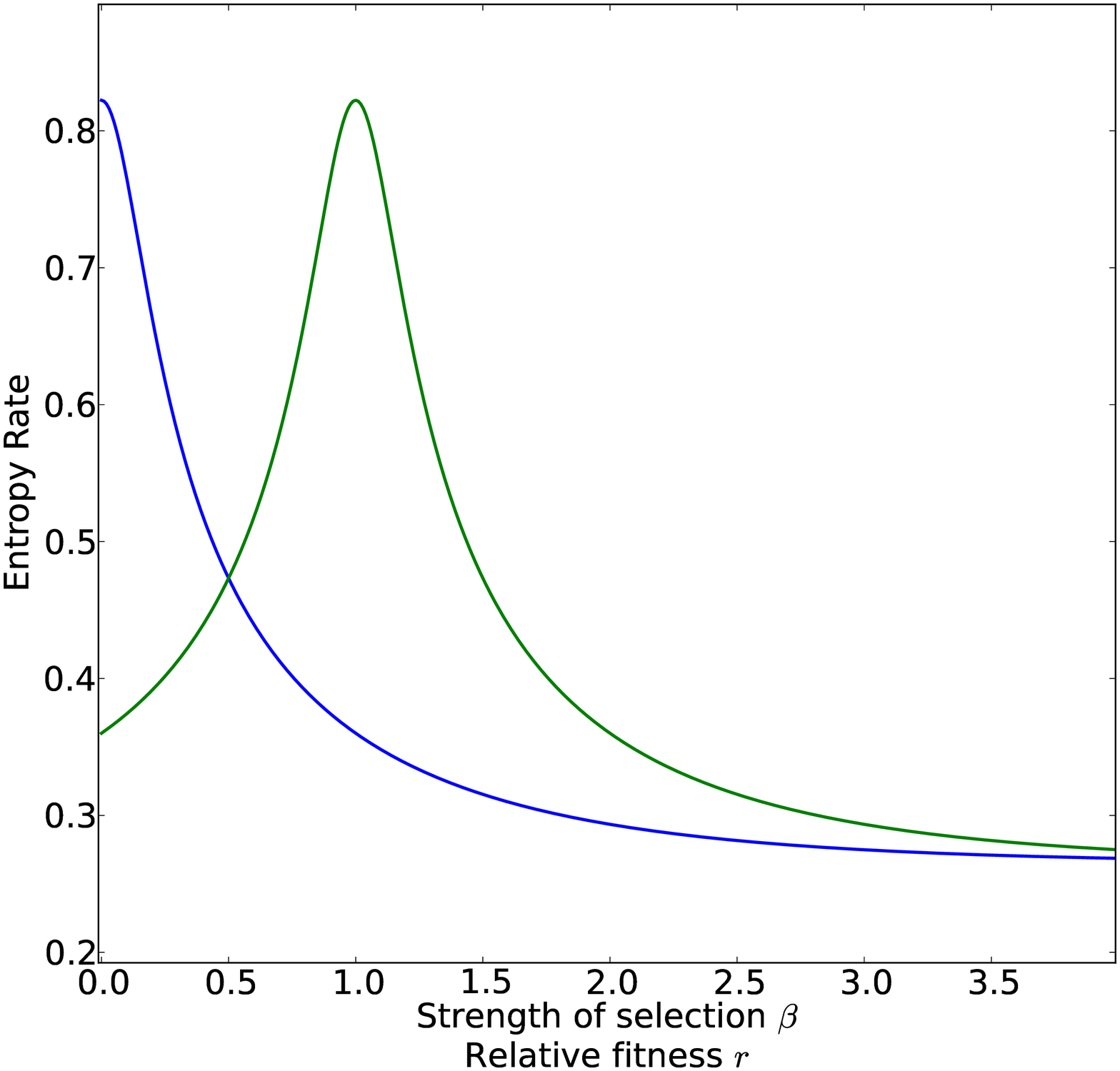}
    \caption{Weak selection limit: Entropy rate vs. Strength of selection / relative fitness, $N=30$, $n=2$, $\mu=1/N$. Blue curve: The process is defined by Fermi incentive on the Moran landscape $r=2$. The entropy rate is monotonically decreasing in $\beta$ and maximal at $\beta = 0$ where the landscape is neutral. Green curve: The process is defined by Fermi incentive with $\beta=1$ on the Moran landscape for variable $r$. The entropy rate is maximal when $r=1$ and decreasing away from $r=1$.
    }
    \label{figure_2}
\end{figure} 

\subsection{Computational Examples for Some Three-type Landscapes}

Selection can reduce the entropy rate, as in the examples of the previous section, but it can also increase it by more highly concentrating the stationary distribution in the interior of the simplex. Intuitively, if fitness is frequency independent, selection will favor one or more types and the stationary distribution tends to concentrate on the boundary states, increasing the entropy rate. For frequency dependent selection, the situation is more complex as the entropy rate could increase or decrease. Let us consider some specific landscapes for $n=3$ using the Fermi incentive with $\beta=1$.

I.M. Bomze's gives a classification of 49 three-type phase portraits for the replicator dynamic with linear fitness landscape (see \cite{bomze1983lotka} and the additions and corrections \cite{bomze1995lotka}). These landscapes catalog a variety of behaviors including interior equilibria, connected sets of interior equilibria, and boundary equilibria. For $N=30$, $\mu=1/N$, $\beta=1$, the entropy rate of the neutral landscape (Bomze's \#0) has entropy rate $1.155$; 27 of the remaining 48 matrices given by Bomze have entropy rate less than the neutral landscape, with an overall minimum of 0.38 for \#13 and a maximum of 1.62 for \#7. The relationships between selection, drift, and mutation are more complex for three types and this is reflected in the entropy rate. The rock-paper-scissors landscape (\#16) is nearly the same as the neutral landscape (1.152). A generalized rock-paper-scissors matrix takes the form
\[ A = \left( \begin{smallmatrix}
     0  & -b & a \\
     a  & 0  & -b\\
     -b & a  &  0\\
    \end{smallmatrix} \right) \]
Bomze's \#16 is the case $a=1=b$. Fix $b=1$. For the replicator dynamic, the central point is stable for $a>1$ and unstable for $a<1$. Computations indicate that the entropy rate (with all other parameters fixed) is increasing in $a$ as central state switches from unstable to stable and the stationary distribution shifts from concentration on the boundary to the central state, as expected. See Figure \ref{figure_3}. The behavior with respect to increasing $N$ depends on whether $a > b$ or not.

\begin{figure}[h!]
    \centering
    \includegraphics[width=0.4\textwidth]{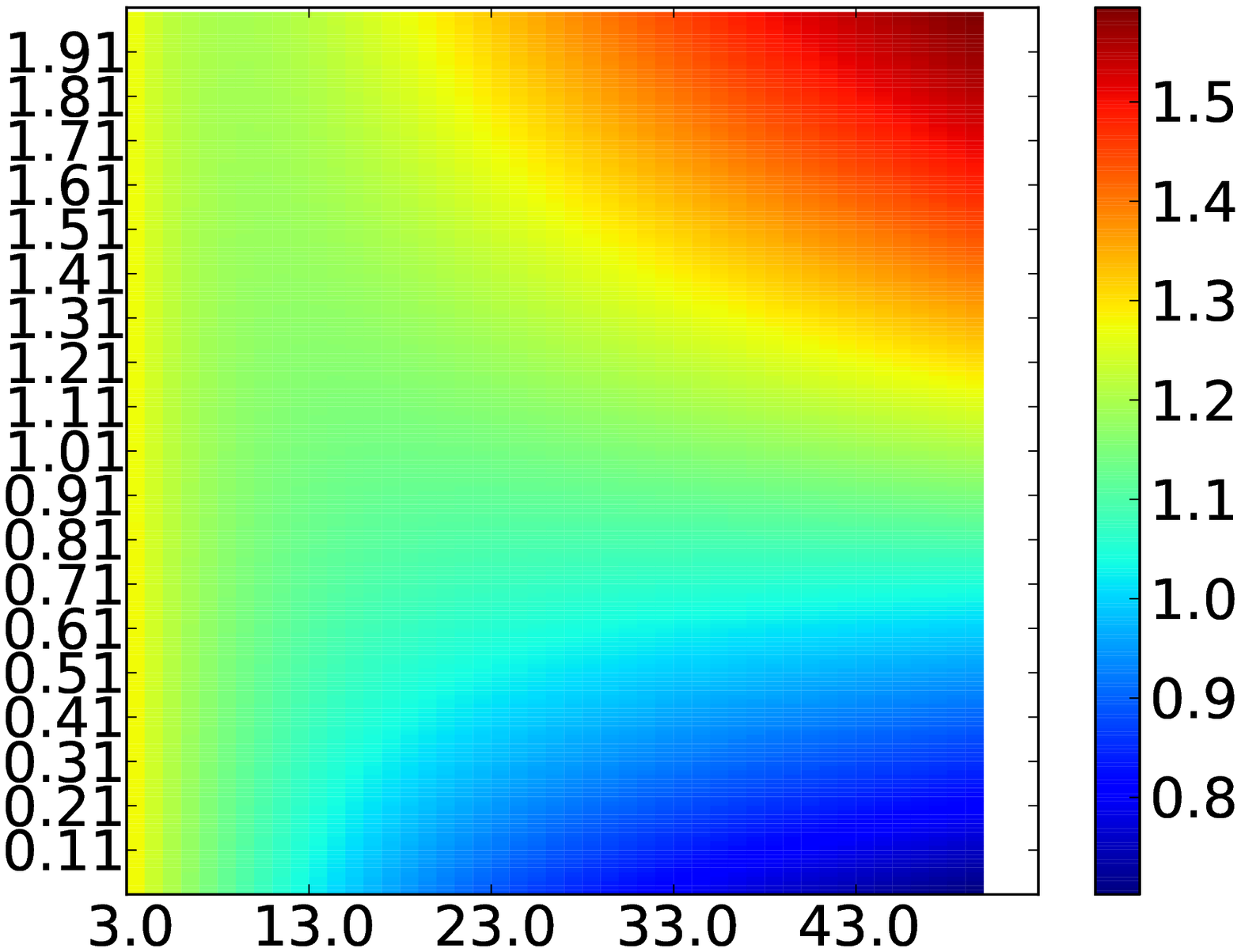}
    \includegraphics[width=0.4\textwidth]{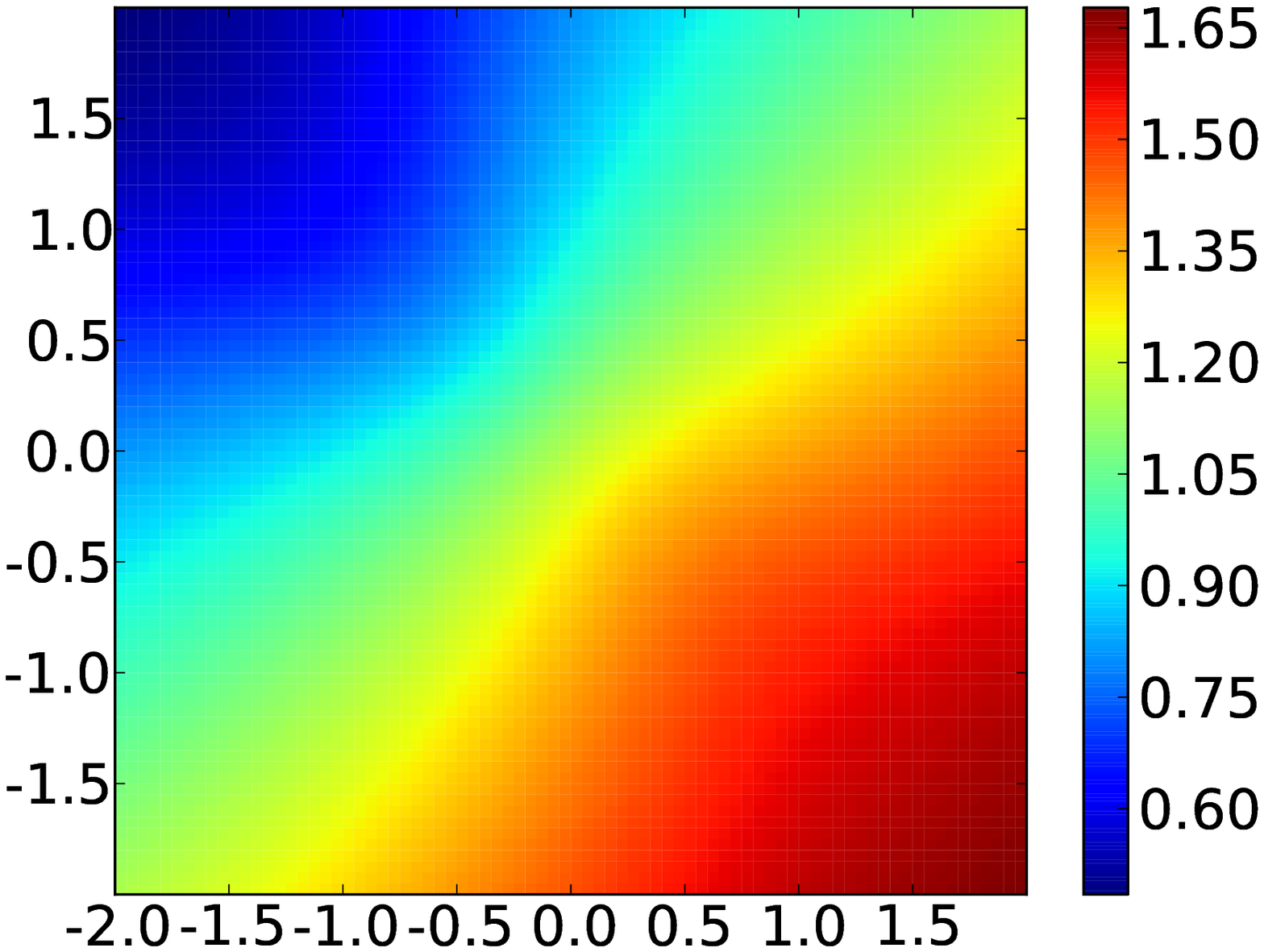}
    \caption{Left: Entropy rates for population size $3 \leq N < 50$ (horizontal axis) and Rock-Scissors-Paper parameter $a$ (vertical axis), with $b=1$ fixed, and Fermi selection given by $\beta=1$, and $\mu = 1/N$. For $a > b$, the population tends toward the center of the simplex, resulting in a higher entropy rate. For $a<b$, the stationary distribution builds up on the boundary, and is decreasing as the population size increases. Right: Entropy rates for RSP with $-2<a<2$ on the horizontal axis and $-2<b<2$ on the vertical, for $N=20$, $\beta=1$.}
    \label{figure_3}
\end{figure}

\section{Discussion}

We have seen in multiple contexts that the entropy rate scales monotonically with mutation for the most commonly used selection incentives (Moran and Fermi) for the neutral landscape, vanishes with mutation, approaches an upper bound (increasing in the number of types) for large populations, and scales monotonically with respect to strength of selection and relative fitness. As such, the entropy rate is an intuitive invariant for generalized birth-death processes, and useful as a means of comparing such processes and their dependences on fundamental evolutionary parameters. Moreover, we have seen that the entropy rate explains at least some commonly observed scalings between evolutionary parameters, such as the relationship between drift and mutation. The interested reader should consult \cite{harper2013inherent} for a large variety of computational and analytic examples for two-type populations, which revel additional relationships between relative fitness, mutation, and population size.

The author proposes entropy rate as a useful characterization not just of the long run variation but also of the relative stability of populations in which the equilibrium points are the same \cite{harper2013stationary}. For instance we have seen an example of a population evolving under a best reply dynamic that has an equilibrium (most likely stationary state) at the central state $(N/2, N/2)$. A population evolving under fitness proportionate selection with either a neutral landscape or a hawk-dove landscape ($a = 1 = d$, $2 = b = c$) will have the same maximum of the stationary distribution, but will have more variation in the stationary distribution. Moreover, the entropy rates of these processes generally differ whereas other long run comparison tools like stationary abundance \cite{antal2009strategy} \cite{gokhale2011strategy} \cite{allen2012measures} \cite{wu2013dynamic} may report the same expectations (since the stationary distributions are symmetric). Hence entropy rate can sometimes distinguish between these processes in a way that other common methods cannot. Furthermore, since entropy rates can be estimated from data, they may be the basis of statistical tests for neutrality. Stationary abundance is able to distinguish cases that the entropy rate cannot, for instance in cases in which $\mu \to 0$ and the stationary distribution (only nonzero on the boundary) is not symmetric, since the entropy rate is zero in all such cases. Therefore a measure of long term variation like entropy rate is a complementary invariant to other tools in use for comparing the long run outcomes of evolving populations.

\subsection*{Methods}
All computations were performed with python code available at \url{https://github.com/marcharper/stationary}. This package can compute exact stationary distributions for detail-balanced processes and approximate solutions for all other cases mentioned in this manuscript. All plots created with \emph{matplotlib} \cite{Hunter:2007}.

\subsection*{Acknowledgments}

This research was partially supported by the Office of Science (BER), U. S. Department of Energy, Cooperative Agreement No. DE-FC02-02ER63421.

% Stationary distributions for the Wright-Fisher process are not known in general and are often studied by diffusion approximations. It is known, however, that as $N \to \infty$, the stationary distributions converge to the diffusion approximation's stationary distribution. These stationary distributions are Dirichlet distributions (Beta distributions when $n=2$).
% 
% 
% 
% 
% \subsection{Neutral Evolution}

\bibliography{ref}

\begin{thebibliography}{10}

\bibitem{akin1984evolutionary}
Ethan Akin and Viktor Losert.
\newblock Evolutionary dynamics of zero-sum games.
\newblock {\em Journal of mathematical biology}, 20(3):231--258, 1984.

\bibitem{allen2012measures}
Benjamin Allen and Corina~E Tarnita.
\newblock Measures of success in a class of evolutionary models with fixed
  population size and structure.
\newblock {\em Journal of mathematical biology}, pages 1--35, 2012.

\bibitem{andrae2010entropy}
Benjamin Andrae, Jonas Cremer, Tobias Reichenbach, and Erwin Frey.
\newblock Entropy production of cyclic population dynamics.
\newblock {\em Physical review letters}, 104(21):218102, 2010.

\bibitem{antal2009strategy}
T.~Antal, M.A. Nowak, and A.~Traulsen.
\newblock Strategy abundance in 2$\times$ 2 games for arbitrary mutation rates.
\newblock {\em Journal of theoretical biology}, 257(2):340, 2009.

\bibitem{antal2006fixation}
T.~Antal and I.~Scheuring.
\newblock Fixation of strategies for an evolutionary game in finite
  populations.
\newblock {\em Bulletin of mathematical biology}, 68(8):1923--1944, 2006.

\bibitem{bomze1983lotka}
Immanuel~M Bomze.
\newblock Lotka-volterra equation and replicator dynamics: a two-dimensional
  classification.
\newblock {\em Biological cybernetics}, 48(3):201--211, 1983.

\bibitem{bomze1995lotka}
Immanuel~M Bomze.
\newblock Lotka-volterra equation and replicator dynamics: new issues in
  classification.
\newblock {\em Biological cybernetics}, 72(5):447--453, 1995.

\bibitem{bulmer1991selection}
Michael Bulmer.
\newblock The selection-mutation-drift theory of synonymous codon usage.
\newblock {\em Genetics}, 129(3):897--907, 1991.

\bibitem{burger1994distribution}
R~B{\"u}rger and Russell Lande.
\newblock On the distribution of the mean and variance of a quantitative trait
  under mutation-selection-drift balance.
\newblock {\em Genetics}, 138(3):901--912, 1994.

\bibitem{ciuperca2005estimation}
Gabriela Ciuperca and Valerie Girardin.
\newblock On the estimation of the entropy rate of finite markov chains.
\newblock In {\em Proceedings of the International Symposium on Applied
  Stochastic Models and Data Analysis}, 2005.

\bibitem{claussen2005non}
Jens~Christian Claussen and Arne Traulsen.
\newblock Non-gaussian fluctuations arising from finite populations: Exact
  results for the evolutionary moran process.
\newblock {\em Physical Review E}, 71(2):025101, 2005.

\bibitem{cover2006elements}
T.M. Cover and J.A. Thomas.
\newblock {\em Elements of information theory}.
\newblock Wiley-interscience, 2006.

\bibitem{dingli2011stochastic}
David Dingli and Jorge~M Pacheco.
\newblock Stochastic dynamics and the evolution of mutations in stem cells.
\newblock {\em BMC biology}, 9(1):41, 2011.

\bibitem{ewens2004mathematical}
Warren~J Ewens.
\newblock {\em Mathematical population genetics: I. Theoretical introduction},
  volume~27.
\newblock Springer, 2004.

\bibitem{ficici2000effects}
S~Ficici, J~Pollack, et~al.
\newblock Effects of finite populations on evolutionary stable strategies.
\newblock In {\em Proceedings of the 2000 genetic and evolutionary computation
  conference}, pages 927--934. Morgan-Kaufmann, 2000.

\bibitem{fogel1998instability}
Gary~B Fogel, Peter~C Andrews, and David~B Fogel.
\newblock On the instability of evolutionary stable strategies in small
  populations.
\newblock {\em Ecological Modelling}, 109(3):283--294, 1998.

\bibitem{fryer2012existence}
Dashiell~EA Fryer.
\newblock On the existence of general equilibrium in finite games and general
  game dynamics.
\newblock {\em arXiv preprint arXiv:1201.2384}, 2012.

\bibitem{fudenberg2004stochastic}
Drew Fudenberg, Lorens Imhof, Martin~A Nowak, and Christine Taylor.
\newblock Stochastic evolution as a generalized moran process.

\bibitem{fudenberg2006imitation}
Drew Fudenberg and Lorens~A Imhof.
\newblock Imitation processes with small mutations.
\newblock {\em Journal of Economic Theory}, 131(1):251--262, 2006.

\bibitem{fudenberg2008monotone}
Drew Fudenberg and Lorens~A Imhof.
\newblock Monotone imitation dynamics in large populations.
\newblock {\em Journal of Economic Theory}, 140(1):229--245, 2008.

\bibitem{fudenberg2006evolutionary}
Drew Fudenberg, Martin~A Nowak, Christine Taylor, and Lorens~A Imhof.
\newblock Evolutionary game dynamics in finite populations with strong
  selection and weak mutation.
\newblock {\em Theoretical Population Biology}, 70(3):352--363, 2006.

\bibitem{gokhale2011strategy}
Chaitanya~S Gokhale and Arne Traulsen.
\newblock Strategy abundance in evolutionary many-player games with multiple
  strategies.
\newblock {\em Journal of Theoretical Biology}, 283(1):180--191, 2011.

\bibitem{harper2011escort}
Marc Harper.
\newblock Escort evolutionary game theory.
\newblock {\em Physica D: Nonlinear Phenomena}, 240(18):1411--1415, 2011.

\bibitem{harper2013inherent}
Marc Harper.
\newblock The inherent randomness of evolving populations.
\newblock {\em arXiv preprint arXiv:1303.1890}, 2013.

\bibitem{harper2013incentive}
Marc Harper and Dashiell Fryer.
\newblock Incentive processes in finite populations.
\newblock {\em arXiv preprint arXiv:1306.2389}, 2013.

\bibitem{harper2012stability}
Marc Harper and Dashiell~EA Fryer.
\newblock Stability of evolutionary dynamics on time scales.
\newblock {\em arXiv preprint arXiv:1210.5539}, 2012.

\bibitem{harper2013stationary}
Marc Harper and Dashiell~EA Fryer.
\newblock Stationary stability for evolutionary dynamics in finite populations.
\newblock {\em arXiv preprint arXiv:1311.0941}, 2013.

\bibitem{hernando2013workings}
A~Hernando, R~Hernando, A~Plastino, and AR~Plastino.
\newblock The workings of the maximum entropy principle in collective human
  behaviour.
\newblock {\em Journal of The Royal Society Interface}, 10(78), 2013.

\bibitem{hofbauer2003evolutionary}
Josef Hofbauer and Karl Sigmund.
\newblock Evolutionary game dynamics.
\newblock {\em Bulletin of the American Mathematical Society}, 40(4):479--519,
  2003.

\bibitem{Hunter:2007}
J.~D. Hunter.
\newblock Matplotlib: A 2d graphics environment.
\newblock {\em Computing In Science \& Engineering}, 9(3):90--95, 2007.

\bibitem{imhof2006evolutionary}
L.A. Imhof and M.A. Nowak.
\newblock Evolutionary game dynamics in a wright-fisher process.
\newblock {\em Journal of mathematical biology}, 52(5):667--681, 2006.

\bibitem{khare2009rates}
Kshitij Khare and Hua Zhou.
\newblock Rates of convergence of some multivariate markov chains with
  polynomial eigenfunctions.
\newblock {\em The Annals of Applied Probability}, pages 737--777, 2009.

\bibitem{kimura1985neutral}
Motoo Kimura.
\newblock {\em The neutral theory of molecular evolution}.
\newblock Cambridge University Press, 1985.

\bibitem{kolmogorov1959entropy}
A.~N. Kolmogorov.
\newblock On entropy per unit time as a metric invariant of automorphism.
\newblock Number 124, pages 754--755, 1959.

\bibitem{moran1958random}
Patrick Alfred~Pierce Moran.
\newblock Random processes in genetics.
\newblock In {\em Mathematical Proceedings of the Cambridge Philosophical
  Society}, volume~54, pages 60--71. Cambridge Univ Press, 1958.

\bibitem{moran1962statistical}
Patrick Alfred~Pierce Moran et~al.
\newblock The statistical processes of evolutionary theory.
\newblock {\em The statistical processes of evolutionary theory.}, 1962.

\bibitem{nowak2006evolutionary}
Martin~A Nowak.
\newblock {\em Evolutionary dynamics: exploring the equations of life}.
\newblock Belknap Press, 2006.

\bibitem{nowak2004emergence}
Martin~A Nowak, Akira Sasaki, Christine Taylor, and Drew Fudenberg.
\newblock Emergence of cooperation and evolutionary stability in finite
  populations.
\newblock {\em Nature}, 428(6983):646--650, 2004.

\bibitem{shannon1949mathematical}
C.E. Shannon, W.~Weaver, R.E. Blahut, and B.~Hajek.
\newblock {\em The mathematical theory of communication}, volume 117.
\newblock University of Illinois press Urbana, 1949.

\bibitem{yag1959notion}
Ya.~G. Sinai.
\newblock On the notion of entropy of a dynamical system.
\newblock In {\em Dokl Akad Nauk SSSR}, volume 124, pages 768--771, 1959.

\bibitem{strelioff2007inferring}
C.C. Strelioff, J.P. Crutchfield, and A.W. H{\"u}bler.
\newblock Inferring markov chains: Bayesian estimation, model comparison,
  entropy rate, and out-of-class modeling.
\newblock {\em Physical Review E}, 76(1):011106, 2007.

\bibitem{taylor2004evolutionary}
Christine Taylor, Drew Fudenberg, Akira Sasaki, and Martin~A Nowak.
\newblock Evolutionary game dynamics in finite populations.
\newblock {\em Bulletin of mathematical biology}, 66(6):1621--1644, 2004.

\bibitem{taylor2006symmetry}
Christine Taylor, Yoh Iwasa, and Martin~A Nowak.
\newblock A symmetry of fixation times in evoultionary dynamics.
\newblock {\em Journal of theoretical biology}, 243(2):245--251, 2006.

\bibitem{traulsen2009stochastic}
Arne Traulsen and Christoph Hauert.
\newblock Stochastic evolutionary game dynamics.
\newblock {\em Reviews of nonlinear dynamics and complexity}, 2:25--61, 2009.

\bibitem{traulsen2006stochasticity}
Arne Traulsen, Jorge~M Pacheco, and Lorens~A Imhof.
\newblock Stochasticity and evolutionary stability.
\newblock {\em Physical Review E}, 74(2):021905, 2006.

\bibitem{traulsen2007pairwise}
Arne Traulsen, Jorge~M Pacheco, and Martin~A Nowak.
\newblock Pairwise comparison and selection temperature in evolutionary game
  dynamics.
\newblock {\em Journal of theoretical biology}, 246(3):522--529, 2007.

\bibitem{tsallis1988possible}
Constantino Tsallis.
\newblock Possible generalization of boltzmann-gibbs statistics.
\newblock {\em Journal of statistical physics}, 52(1):479--487, 1988.

\bibitem{wakeley2005limits}
John Wakeley.
\newblock The limits of theoretical population genetics.
\newblock {\em Genetics}, 169(1):1--7, 2005.

\bibitem{wu2013dynamic}
Bin Wu, Arne Traulsen, and Chaitanya~S Gokhale.
\newblock Dynamic properties of evolutionary multi-player games in finite
  populations.
\newblock {\em Games}, 4(2):182--199, 2013.

\end{thebibliography}
\bibliographystyle{plain}

\end{document}